\newcounter{results}[section] 
\newcounter{steps}[section] 
\theoremstyle{plain}
\newtheorem{theorem}[results]{Theorem}
\newtheorem{lemma}[results]{Lemma}
\newtheorem{proposition}[results]{Proposition}
\newtheorem{corollary}[results]{Corollary}
\newtheorem*{theorem*}{Theorem}
\newtheorem*{lemma*}{Lemma}
\newtheorem*{proposition*}{Proposition}
\newtheorem*{corollary*}{Corollary}
\newtheorem*{exercise*}{Exercise}
\newtheorem*{fact*}{Fact}
\theoremstyle{remark}
\newtheorem{remark}[results]{Remark}
\theoremstyle{definition}
\newtheorem{definition}[results]{Definition}
\newtheorem*{definition*}{Definition}
\newtheorem*{example*}{Example}
\numberwithin{equation}{section}
\newcommand{\R}{\ensuremath{\mathbb R}}
\DeclareMathOperator{\tr}{tr}
\newcommand{\lapl}{\ensuremath{\Delta}} 
\DeclareMathOperator{\II}{I\!I} 
\newcommand{\M}{\ensuremath{\mathscr M}} 
\newcommand{\met}{\ensuremath{\mathscr R}} 
\newcommand{\e}{\varepsilon}
\newcommand{\be}{\begin{equation}}
\newcommand{\ee}{\end{equation}}
     \title[Attaching faces of PSC manifolds with corners]{Attaching faces of positive scalar curvature manifolds with corners}
     \author{Alessandro Carlotto and Chao Li}
     \address{
     \newline \indent Alessandro Carlotto: 
     \newline Universit\`a di Trento, Dipartimento di Matematica,
via Sommarive 14, 38123 Trento, Italy
\newline\textit{E-mail address: alessandro.carlotto@unitn.it}
     	\newline ETH D-Math, R\"amistrasse 101, 8092 Z\"urich, Switzerland 
     	 \newline \textit{E-mail address: alessandro.carlotto@math.ethz.ch} 
     	 \newline \newline \indent Chao Li: 
     	 \newline New York University - Courant Institute of Mathematical Sciences, 251 Mercer Street, New York, NY 10012, United States of America
     	 \newline \textit{E-mail address: chaoli@nyu.edu} 
       }
\begin{document}

     \dedicatory{In memory of Robert Bartnik}
     	
     	\begin{abstract}
     	We prove a novel desingularization theorem, that allows to smoothly attach two given manifolds with corners by suitably gluing a pair of isometric faces, with control on both the scalar curvature of the resulting space and the mean curvature of its boundary.
	\end{abstract}

     	\maketitle      
     
     \tableofcontents

     \section{Introduction} \label{sec:intro}

In 1989 Bartnik proposed a novel, intriguing notion of quasi-local mass \cite{Bar89}, which would propel a tremendous amount of mathematical research in the following decades. Roughly speaking, and sticking to the simplest possible setting, he suggested to quantify the amount of \emph{gravitational mass} in a given region $\Omega$ of a time-symmetric, asymptotically flat initial data set $(M,g)$ by considering all \emph{admissible} asymptotically flat extensions of such a region and then minimizing the ADM mass functional within such a class. Here, the word ``admissible'' encodes on the one hand the curvature requirement(s) imposed by the Einstein constraint equations (cf. e.\,g. \cite{Car21b}), and on the other hand additional geometric conditions, such as for instance the prescription that $\partial\Omega$ not be enclosed by apparent horizons (or other, similar yet in general inequivalent non-degeneracy conditions aimed at ensuring that the infimum in question not be trivially zero, see \cite{McC20}). We refer the reader to \cite{Bar02}, \cite{Shi12} and \cite{WanYau09, WanYau09b}, among others, for a comparative discussion of various notions of quasi-local mass within the framework of general relativity, with main focus on asymptotically flat initial data sets.

While enjoying a number of physically desirable properties, Bartnik's definition immediately poses some remarkable challenges on the analytic front, which naturally stem from its very variational character. Firstly, one would like to understand to what extent the degree of \emph{regularity} of the aforementioned extensions plays a role in minimizing the functional in question. Secondly, is it plausible to expect existence of minimizers (that is to say: to prove that the infimum above is actually achieved) for ample classes of data?
It is clear that such two questions are very much intertwined, for indeed - say by analogy with all classic problems in the calculus of variations - 
it would be natural to pose the problem above under a lower regularity requirement and then prove \emph{a posteriori} the regularity of extremals (if feasible).
Both questions are arguably challenging when posed in full generality, but the partial advances we have witnessed have nevertheless shed new light on this matter, sometimes leading to far-reaching and partly unexpected developments. (For a very recent review of the state of the art concerning the Bartnik quasi-local mass conjectures the reader may wish to consult \cite{And23}.)

Concerning the first issue, the most basic singularities one may allow for are those occurring at the interface of the domain under consideration. Motivated by the problem of proving the Riemannian positive mass theorem for metrics that may only have Lipschitz regularity along a closed hypersurface (and are smooth anywhere else), Miao designed in \cite{Mia02} a method for desingularizing such interfaces in a controlled fashion while keeping the scalar curvature non-negative, as prescribed by the dominant energy condition in the time-symmetric case; interestingly, it is not necessary for the mean curvature functions on both sides of the interface to match, but it is simply enough that they satisfy the pointwise inequality $H_{-}\geq H_{+}$ (condition $(\textbf{H})$ therein), which one may regard as the requirement of a positive contribution to the scalar curvature, in a suitable weak (in fact: distributional) sense.  This type of smoothing procedure has, since then, become a standard tool in geometric analysis: among various applications (and variations on the theme), it has been crucially employed by the authors to study, on any compact 3-manifold, the space of Riemannian metrics having positive scalar curvature and mean-convex boundary \cite{CarLi19, CarLi21}, proving that such a space is always either empty or contractible. In general terms, the possiblity of \emph{doubling} manifolds with boundary (while preserving suitable curvature conditions) allows to rephrase the original problem into one for closed manifolds, albeit in presence of a $\mathbb{Z}_2$-equivariance constraint given by a reflectional symmetry.

Our purpose here is to study to what extent this result by Miao  can be extended to the case of \emph{manifolds with boundary}; this amounts to investigating the problem of attaching manifolds with corners (see \cite{Joy12} for an interesting overview, and for various background references) by smoothly gluing a pair of distinguished faces while preserving some pre-assigned curvature conditions; typically the conditions we focus on are the positivity of the scalar curvature of the resulting manifold and (for instance) the minimality or mean-convexity of its (leftover) boundary. In particular, we shall be concerned with the case when the isometric faces to be attached form, in their respectively manifolds, a right angle with the adjacent faces; an important special case, partly motivated by forthcoming geometric applications in the spirit of \cite{BamLiMan22}, is when the interfaces in question are free boundary minimal hypersurfaces (see also our recent note \cite{CarLi23}, where we introduced the notion of \emph{minimal concordance}, for related discussions). The main theorem we prove, that is Theorem \ref{thm:Desing}, is rather general and structured, so that the reader may wish to first look at Corollary \ref{cor:HalfSphere} for a simpler, illustrative application. 

After the original proof by Miao, which proceeds by fiberwise convolution and then conformal deformation, the same result has also been obtained via different methods: specifically via Hamilton's Ricci flow in \cite{McFSze12} (also with the  aim of proving the positive mass theorems for metrics with edge singularities along a hypersurface) and then, much more recently, with refined local deformation methods in \cite{BarHan20}. Our approach is arguably closest to the third and latest of the methods above, although it also exploits (in the very final step) a localized conformal deformation and thus is somewhat hybrid in spirit; we also note some analogies with the important work by Brendle-Marques-Neves \cite{BreMarNev11}, where counterexamples to the Min-Oo conjectures have first been constructed. A specific challenge we face is that, since the interface in question has a boundary (hence, ultimately, since the distance function from such an interface may not be smooth and, no matter that, its level sets will not in general meet the ambient boundary orthogonally) we need to handle Riemannian metrics that take the rather general form $g= u(x,t)^2dt^2 + h_t(x)$ near $X$, to be contrasted to the important special case when $u\equiv 1$ that is indeed the object of \cite{BarHan20}.

Our main result is proven in Section \ref{sec:Desing}, although we decided to 
single out two preparatory reductions deserving separate statements (and correspondingly devoted sections)  since we believe that some of such arguments may be of independent interest and utility, and then to split the proof in question into two separate steps; in particular, the problem of connecting two metrics whose warping factors do not match (cf. Remark \ref{rem:WarpFactNoMatch}), which is perhaps the most challenging part of the whole work, is studied in Step 1 therein (crucially building on the \emph{ad hoc} fairly delicate construction of a suitable interpolation function presented in Appendix \ref{app:GluingFunction}).

Lastly, in Section \ref{sec:Applications} we collected some direct yet significant applications, also partly connected to and motivated by our recent note \cite{CarLi23}. In particular, we justify for instance the following claims: firstly, the relation of weak PSC min-concordance (as per Definition 2.3 therein) is \emph{transitive} hence an equivalence relation; secondly,
if one restricts a priori to the subspace $\met_{R>0, H=0}(X)$ then two metrics are PSC min-concordant if and only if they are \emph{weakly} PSC min-concordant. Said otherwise,
it is the same to require, within that subspace and on top of conditions (i) and (ii) of that definition, for item (iii) therein that the faces $X\times\left\{0\right\}$ and $X\times\left\{1\right\}$ meet $\partial X\times [0,1]$ orthogonally and satisfy any of the (local) geometric conditions in the following hierarchy: 
$
\centering
(\text{doubling}) \ \Rightarrow \ (\text{totally geodesic}) \ \Rightarrow \  (\text{minimal}),$
or even that the metric be a Riemannian product in a neighborhood of the faces.

The relevance of the notion of weak concordance has been justified in \cite{CarLi23} (which in turn stems from earlier work of Mantoulidis-Schoen \cite{ManSch15} on the Bartnik mass of apparent horizons), in view of its relation with 
 spaces of metrics defined by ``spectral stability'' conditions.
 Such spaces $\M^{>0}_{\kappa}(X)$ (and in particular the central $\M^{>0}_{1/2}(X)=:\M(X)$) are in fact much larger than $\met_{R>0,H=0}(X)$ and, correspondingly, enjoy additional degrees of flexibility that are desirable in various situations. Thanks to Corollary \ref{cor:Equivalence}, Definition 2.3 in \cite{CarLi23} thus emerges as an arguably reasonable extension of the classical, fundamental notion of concordance to such spaces.

\section{The ambient metric near a free boundary minimal hypersurface }\label{sec:NormalForm}

Given $n\geq 2$, we let $X^n$ denote a compact differentiable $(C^{\infty})$ manifold, of dimension equal to $n$, with possibly non-empty boundary. Consistently with our previous work \cite{CarLi21}, we will denote by $\met=\met(X)$ the cone of smooth Riemannian metrics on $X$, and we shall be particularly concerned with its topological subspaces defined by binary relations involving its scalar curvature and the mean curvature of its boundary. 

For a Riemannian metric $h$ on $X$ we let $\nu=\nu_h$ denote an outward-pointing unit normal vector field along $\partial X$, take $\II_h$ to be the second fundamental form (with respect to $\nu$) and $H_h$ its trace (that is: the mean curvature of $\partial X$); throughout this article, we adopt the convention that the unit sphere in $\R^3$ has mean curvature equal to $2$
and we will say - for a Riemannian metric $h$ on $X$ - that $(X,h)$ is mean-convex if the mean curvature of such a manifold is greater or equal than zero, namely if $H_{h}\geq 0$; when we wish to stress the strict inequality $H_h>0$ we will write it explicitly. Geometrically speaking, with our convention strict mean-convexity (of a boundary) means that an outward deformation will increase area to first order.

For later reference and use, we recall some facts from \cite{CarLi23}. Firstly, we collect in the next statement some basic slicing formulae concerning warped product metrics on cylinders. (Here, and in the sequel, we will informally employ the word \emph{cylinder} when referring to any smooth manifold of the form $X\times J$ for any interval $J\subset\R$; the cylindrical boundary is then by definition $\partial X\times J$.)

\begin{lemma}\label{lem:Formulae}
Let us consider on the product manifold $M=X\times J$ a smooth metric of the form
\[
g(x,t)=u(x,t)^2 dt^2+h_t(x),
\]
where $u\in C^{\infty}(M)$ and the map $J\ni t \mapsto h_t(x)\in  \met(X)$ is also smooth.
 Then the following formulae hold:
\begin{enumerate}[(1)]
\item{2nd fundamental form of the slice $X\times\left\{t\right\}$
\[\II_t(x)=(2u(x,t))^{-1}\frac{d}{dt}h_t(x);\]}
\item{mean curvature of the slice $X\times\left\{t\right\}$
\[
H_t(x)=(2u(x,t))^{-1}\tr_{h_t}\frac{d}{dt}h_t(x);\]}
\item{scalar curvature of the product manifold
\[
R_g(x,t) =2u(x,t)^{-1}(-\lapl_{h_t} u+\frac{1}{2}R_{h_t} u)
 -2u(x,t)^{-1}\frac{d}{dt}H_t(x)-(H_t(x))^2-|\II_t|^2
\]
}
\item{mean curvature of the cylindrical boundary of the product manifold
\[
H_g(x,t)=H_h (x,t)  +\partial_{\nu_h}\log u(x,t).
\]}
\end{enumerate} 
(Note that, for the first two equations we have considered $X\times\left\{t\right\}$ as boundary of $X\times [0,t]$, i.\ e. we worked with respect to the normal $\partial_t$).
\end{lemma}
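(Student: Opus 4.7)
The plan is to establish each of the four identities via direct orthonormal-frame and Koszul-formula computations in an adapted local setting. Throughout, I write $N:=u^{-1}\partial_t$ for the unit normal to the slices $X\times\{t\}$ (oriented outward, viewing the slice as $\partial(X\times[0,t])$), and I lift vector fields $V,W$ from $X$ so that they commute with $\partial_t$ and satisfy $g(V,\partial_t)=g(W,\partial_t)=0$. With this setup, the mean-curvature sign convention fixed in the paragraphs preceding the lemma unambiguously determines the signs.

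For (1) and (2), Koszul's identity applied to $\nabla_V W$ has most of its terms killed by the commutation and orthogonality relations, leaving $2g(\nabla_V W,\partial_t)=-\partial_t h_t(V,W)$. By the sign convention $\II_t(V,W)=-g(N,\nabla_V W)$ this gives (1), and tracing with respect to $h_t$ yields (2). For (4), the outward unit normal to $\partial X\times J$ in $(M,g)$ is the lift of $\nu_h$, and an adapted $g$-orthonormal frame on $\partial X\times J$ consists of an $h_t$-orthonormal frame $\{f_i\}_{i=1}^{n-1}$ on $\partial X$ together with $N$. The off-diagonal contributions to the mean curvature vanish; the horizontal part of the trace reduces to $H_h$ because $\nabla^g_{f_i}f_j$ differs from $\nabla^{h_t}_{f_i}f_j$ only along $\partial_t$, which is killed upon pairing with $\nu_h$; and the vertical contribution equals $-u^{-2}g(\nabla_{\partial_t}\partial_t,\nu_h)=\partial_{\nu_h}\log u$, again by Koszul.

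The substantive part is (3). My approach is to combine the Gauss equation
\[
R_g = R_{h_t} - H_t^2 + |\II_t|^2 + 2\Ric_g(N,N)
\]
with a Riccati-type identity for the normal direction in the lapse decomposition (with vanishing shift and lapse $u$):
\[
\Ric_g(N,N) = -u^{-1}\lapl_{h_t}u - u^{-1}\tfrac{d}{dt}H_t - |\II_t|^2 .
\]
The Gauss equation is classical. The Riccati identity follows by computing the normal component of the focal equation along the flow of $\partial_t$: a further Koszul calculation yields $\nabla_{\partial_t}\partial_t=(\dot u/u)\partial_t - u\,\grad_{h_t}u$, and the non-geodesic character of the coordinate lines $\{x\}\times J$ - encoded in this gradient term - is precisely what generates the $u^{-1}\lapl_{h_t}u$ correction that is absent in the geodesic case $u\equiv 1$. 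Substituting and cancelling the $|\II_t|^2$ terms then produces (3) after collecting.

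The main obstacle is the derivation of the Riccati identity above: tracking the extra curvature contributions generated by a non-constant warping factor $u$ requires careful bookkeeping, whereas every other step reduces to a routine application of Koszul's formula. Once this identity is in place, the remaining work is purely algebraic, and cross-checking against the flat model (e.g. writing $\R^{n+1}$ in polar coordinates) is an inexpensive way to confirm the signs and constants of the final expression.
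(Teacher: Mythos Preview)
Your argument is correct in all four parts, and the bookkeeping in the Riccati step is right: with $N=u^{-1}\partial_t$ one indeed has $\nabla_N N=-u^{-1}\grad_{h_t}u$, which feeds into $\Ric_g(N,N)=-u^{-1}\lapl_{h_t}u-u^{-1}\dot H_t-|\II_t|^2$, and substitution into the twice-contracted Gauss identity yields exactly item (3). The polar-coordinate check you propose is a good sanity test and confirms the signs.

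There is nothing to compare against: the paper does not prove this lemma at all but simply recalls it from \cite{CarLi23} (see the sentence ``For later reference and use, we recall some facts from \cite{CarLi23}'' immediately preceding the statement). Your write-up therefore supplies what the paper omits; the Gauss-plus-Riccati route you take is the standard one and is entirely adequate here.
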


\begin{lemma}\label{lem:Harmonic}
  Let $(M^{n+1},g)$ be a Riemannian manifold and let $X$ be a compact, connected, properly embedded, two-sided  hypersurface meeting the ambient boundary orthogonally. Then there exist a smooth ($C^{\infty}$) function $f: M\to\mathbb{R}$ such that the following conditions hold true:
  \begin{enumerate}[(1)]
      \item $f=0$ on $X$;
      \item the restriction of the $g$-gradient $[\nabla f]_{|\partial M}$ is tangent to $\partial M$ at each point;
      \item the $g$-gradient $\nabla f$ does not vanish at any point of $X$.
  \end{enumerate}
\end{lemma}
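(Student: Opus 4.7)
The plan is to construct $f$ as a correction of the signed distance function from $X$, adjusted so that its gradient becomes tangent to $\partial M$ along $\partial M$. The geometric input powering the correction is the orthogonality hypothesis, which forces a certain defect function to vanish on the corner $\partial X = X \cap \partial M$.

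I would start in a tubular neighborhood $U$ of $X$, available because $X$ is two-sided, compact and properly embedded, and introduce the signed distance $d \colon U \to \R$ to $X$. The function $d$ automatically satisfies $d|_X = 0$ and $\nabla d|_X = \nu_X$, so conditions $(1)$ and $(3)$ already hold for $d$ itself. The obstruction to $d$ satisfying condition $(2)$ is encoded in the defect $\psi \coloneqq g(\nabla d, \nu)$ on $\partial M \cap U$; since the orthogonality hypothesis places $\nabla d|_{\partial X} = \nu_X$ inside $T\partial M$, the function $\psi$ vanishes identically along $\partial X$.

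Next I would seek $f = d + \eta$ for a smooth correction $\eta$ supported in $U$ and satisfying $\eta|_X = 0$, $\partial_\nu \eta|_{\partial M} = -\psi$, and $\abs{\nabla\eta} < 1/2$ pointwise on $X$. The two boundary prescriptions are automatically compatible along the corner $\partial X$: since $\nu \perp T\partial M \ni \nu_X$, the outward normal $\nu$ lies in $TX$ at points of $\partial X$, so $\eta|_X = 0$ forces $\partial_\nu \eta|_{\partial X} = 0$, which matches $-\psi|_{\partial X} = 0$. Existence of $\eta$ then reduces to a local extension problem: in a Fermi collar $(y, s) \in \partial M \times \co{0}{\varepsilon}$ of $\partial M$, with local coordinates $y = (z, r)$ chosen so that $X$ becomes $\{r = \phi(z, s)\}$ with $\phi(z, 0) = 0$ and $\partial_s \phi(z, 0) = 0$ (both forced by the orthogonality), a standard smooth factorization $\psi(z, r) = r\, \tilde\psi(z, r)$ (valid because $\psi$ vanishes on $\partial X$) allows $\eta$ to be written locally in the explicit form $(r - \phi(z, s)) \cdot s \cdot \tilde\psi(z, r) \cdot \chi(s)$ for a cutoff $\chi$ of small support. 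Patching over a partition of unity subordinate to a finite cover of $\partial X$ by such charts produces a global $\eta$; smallness of $\abs{\nabla\eta}$ on $X$ is secured by shrinking the support of $\chi$.

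Finally, to promote $f = d + \eta$ from $U$ to a smooth function on all of $M$, I would compose with a smooth $\tau \colon \R \to \R$ satisfying $\tau(0) = 0$, $\tau'(0) = 1$, and $\tau$ constant outside a small neighborhood of the origin, and extend $\tilde f \coloneqq \tau \circ f$ by those constant values on the two sides of $X$ outside $U$. Since $\nabla \tilde f = \tau'(f) \nabla f$, conditions $(1)$--$(3)$ persist on $U$ and $\nabla \tilde f$ vanishes (hence is trivially tangent to $\partial M$) outside. I expect the main obstacle to be the construction of $\eta$ in the presence of the corner $\partial X$: the vanishing $\psi|_{\partial X} = 0$ coming from the orthogonality hypothesis is precisely what renders the two boundary prescriptions $\eta|_X = 0$ and $\partial_\nu\eta|_{\partial M} = -\psi$ consistent and solvable by smooth functions.
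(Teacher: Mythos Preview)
The paper does not give its own proof of this lemma: it simply cites \cite[Lemma 2.3]{Aim23} (stated there for domains in Euclidean $\R^N$) and asserts that the construction transplants to Riemannian manifolds with obvious modifications. Your proposal therefore fills in what the paper leaves to a reference, and the argument is essentially correct. The strategy---correct the signed distance $d$ by a perturbation $\eta$ engineered so that $\partial_\nu(d+\eta)=0$ along $\partial M$, with the orthogonality hypothesis guaranteeing compatibility of the two constraints $\eta|_X=0$ and $\partial_\nu\eta|_{\partial M}=-\psi$ along the corner $\partial X$, then cut off via $\tau$---is exactly the natural one, and your local formula and partition-of-unity patching check out.

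Two small points deserve tightening. First, the signed distance from $X$ in a manifold with boundary is not automatically smooth up to $\partial M$, since normal geodesics issued from points of $\partial X$ need not remain in $M$; the standard fix is to briefly extend $(M,g)$ across $\partial M$, or simply to replace $d$ by any defining function for $X$ coming from a collar $X\times(-\delta,\delta)$ compatible with the corner structure. Second, your extension of $\tilde f=\tau\circ f$ ``by those constant values on the two sides of $X$'' tacitly assumes $X$ separates $M$, which is not among the hypotheses; this is unnecessary if you choose $\tau$ compactly supported (so $\tau\equiv 0$ outside a neighborhood of the origin) and extend $\tilde f$ by zero. With these cosmetic adjustments the construction goes through.
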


A construction of this sort is presented, for instance, in \cite[Lemma 2.3]{Aim23}; in fact, we are not even interested in property (iii) therein. While the setting the author refers to is Euclidean $\R^N$, it is easy to check that the result can be transplanted, with rather obvious modifications, to general Riemannian manifolds. We will exploit the following consequence.

\begin{proposition}\label{prop:NormalForm}
 Let $(M^{n+1},g)$ be a Riemannian manifold and let $X$ be a compact, connected, properly embedded, two-sided hypersurface meeting the ambient boundary orthogonally. Then there exists an open neighborhood $U\supset X$ and a diffeomorphism $\Phi: X\times (-\delta,\delta)\to U$ (for some $\delta>0$) 
    such that the pulled-back metric $\Phi^* g_{|U}$ takes the form $u(x,t)^2 dt^2+h_t(x)$ for smooth $u\in C^{\infty}(X\times (-\delta,\delta))$ and $h\in C^{\infty}((-\delta,\delta), \met(X))$.
\end{proposition}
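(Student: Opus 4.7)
My plan is to build the diffeomorphism $\Phi$ via the gradient flow of a rescaled version of the function $f$ produced by Lemma \ref{lem:Harmonic}, using the fact that the ambient boundary is preserved by the flow thanks to property (ii). More precisely, I would first apply Lemma \ref{lem:Harmonic} to obtain a smooth function $f: M \to \R$ vanishing on $X$, with $\nabla f|_{\partial M}$ tangent to $\partial M$ and with $\nabla f \neq 0$ on $X$. By compactness of $X$, there is a tubular neighborhood $W \supset X$ on which $|\nabla f| > 0$, and on $W$ I would define the vector field
\[
V \eqdef \frac{\nabla f}{|\nabla f|^2},
\]
which satisfies $df(V) = 1$. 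Because $\nabla f$ is tangent to $\partial M$ along $\partial M \cap W$, so is $V$.

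Next I would consider the (local) flow $\psi_t$ of $V$. Since $V$ is tangent to $\partial M$, this flow preserves $\partial M$. Using again the compactness of $X$, there exists $\delta > 0$ such that $\psi_t(x)$ is defined for every $x \in X$ and $t \in (-\delta, \delta)$, and I would set
\[
\Phi: X \times (-\delta, \delta) \to M, \qquad \Phi(x, t) \eqdef \psi_t(x).
\]
At points of $X \times \{0\}$ the differential $d\Phi$ sends $\partial_t$ to $V$ (transverse to $X$, since $df(V) = 1 \neq 0$) and acts as the identity on $TX$; hence $d\Phi$ is invertible along $X \times \{0\}$. By the inverse function theorem and compactness of $X$, after possibly shrinking $\delta$, the map $\Phi$ is a diffeomorphism onto an open neighborhood $U$ of $X$. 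Moreover $\Phi$ restricts to a diffeomorphism $\partial X \times (-\delta, \delta) \to \partial M \cap U$, so the decomposition of $\partial U$ into cylindrical and slice parts is respected.

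To identify the shape of the pulled-back metric, I would observe that along any flow line one has $\frac{d}{dt}(f\circ \psi_t) = df(V) = 1$, and $f|_X \equiv 0$, so $f \circ \Phi(x,t) = t$. Consequently the slices $\Phi(X \times \{t\})$ are exactly the level sets $\{f = t\}$, and the curves $t \mapsto \Phi(x, t)$ are integral curves of $V = \nabla f / |\nabla f|^2$, hence everywhere orthogonal to these level sets. This orthogonality means that for any vector $W$ tangent to a slice one has $g(\Phi_*\partial_t, \Phi_* W) = 0$, so the off-diagonal terms vanish and
\[
\Phi^* g = g(V,V) \, dt^2 + h_t(x), \qquad g(V,V) = \frac{1}{|\nabla f|^2},
\]
with $h_t$ the induced metric on $\{f = t\}$ transported to $X$ via $\psi_t^{-1}$. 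Setting $u(x,t) \eqdef |\nabla f|^{-1}(\Phi(x,t))$ and noting that both $u$ and the family $t \mapsto h_t$ are smooth (as $f$, $\psi_t$, and $g$ are), this yields exactly the claimed normal form.

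The only step with any subtlety is checking that $V$ is tangent to $\partial M$ and hence that the flow $\psi_t$ preserves $\partial M$, which is precisely what property (ii) of Lemma \ref{lem:Harmonic} is designed to ensure; without that property one would only obtain the normal form on an open subset of $M$ missing (part of) the corner $\partial X$. Everything else is a standard flowbox argument combined with the classical fact that the flow of a gradient field (suitably reparametrized) straightens level sets into product form.
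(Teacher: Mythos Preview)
Your proof is correct and follows essentially the same strategy as the paper: build product coordinates by flowing from $X$ along (a rescaling of) $\nabla f$, with property (2) of Lemma \ref{lem:Harmonic} guaranteeing that the flow respects $\partial M$. The only difference is that you normalize to $V=\nabla f/|\nabla f|^2$ so that $f\circ\Phi(x,t)=t$ and the slices $\Phi(X\times\{t\})$ are genuine level sets of $f$, which makes the orthogonality $\Phi^*g(\partial_i,\partial_t)=0$ immediate; the paper flows by $\nabla f$ itself and asserts the same block form.
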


\begin{proof}
   Let $f:M \to\R$ be as in the preceding lemma, and let then $\Phi$ denote the gradient flow of $f$, namely the flow whose velocity at each point equals the $g$-gradient of $f$. Note that by property 
   (2) above such a flow is well-defined (for all times) in spite of the boundary of $M$, and by
   (3) there exists an open neighborhood $U$ of $X$ where $\nabla f$ does not vanish at any point, and thus the level sets of $f$ give a foliation of $U$.
   
   Thanks to Lemma \ref{lem:Harmonic} $\Phi$ determines a smooth diffeomorphism $X\times (-\delta,\delta)\to U$, and so $\Phi^* g_{|U}$ will have smooth coefficients.  We observe that, by construction, for any index $i\in \left\{1,\ldots, n\right\}$ the vector field $\Phi_{\ast}\partial _{i}$ is, say at a point $(x,t)$, tangent to the corresponding level set of $f$, while the vector field $\Phi_{\ast}\partial_t$ coincides with the gradient of the same function: hence, 
$\Phi^* g (\partial_{i},\partial_t) =0$ at each point $(x,t)\in X\times [0,\delta)$. Thereby, the conclusion follows by simply letting $u(x,t)^2=\Phi^* g (\partial_{t},\partial_t)$ and $h_t(x)=\Phi^* g (\partial_{i},\partial_j)$ as one varies $i,j=1,2,\ldots, n$.
\end{proof}

We note, as an important special case, that the proposition above applies to two-sided free boundary minimal hypersurfaces.

\begin{remark}\label{rem:NecessityFreeBoundary}
We explicitly remark that the ambient metric $g$ having a block form as stated in Proposition \ref{prop:NormalForm} forces, in particular, $X$ to meet $\partial M$ at a right angle according to the metric $g$. In this sense, the content of such a statement is \emph{de facto} a characterization.
\end{remark}

\begin{remark}\label{rem:ObstructionsNormalForm}
One may in fact wish to impose additional requirements on the warping factor $u$ mentioned in Proposition \ref{prop:NormalForm} above, but there are definite obstructions to deal with. In particular, as the following two examples show, differently from the closed case one \emph{cannot} in general expect - no matter what sort of refinement of the construction - in any such local form $u(x,0)$ to be constant in $x\in X$, nor the weaker ``compatibility condition'' $\partial_{\nu}u(x,0)=0$ to hold for $x\in \partial X$:
\begin{itemize}
\item{consider $M^3=\mathbb{B}^3$ the (closure of the) unit ball in Euclidean space $\R^3$, and let $X$ denote the standard unit disk: then (in the notation of Lemma \ref{lem:Formulae}) one has $H_g=2$, $H_h=1$ and so  by item (4) therein $\partial_{\nu}\log u(x,0)=1$ at any point of the unit circle $\partial X$;}
\item{consider $M^3$ the (closure of the) connected component in Euclidean space $\R^3$ that contains the origin among those two bounded by the standard catenoid of unit waist, and let $X$ denote again the standard unit disk: then one has $H_g=0$, $H_h=1$ and so $\partial_{\nu}\log u(x,0)=-1$ at any point of the unit circle $\partial X$.}
\end{itemize}
\end{remark}

\begin{remark}\label{rem:Scaling}
 We however note, for later reference, that if $f$ satisfies the conditions of Lemma \ref{lem:Harmonic} then so will $\lambda f$ for any positive constant $\lambda$; furthermore, the resulting metric factor $u=u(x,t)$ gets also rescaled by the same number. Hence, we can ensure that $u(x,0)>0$ is made arbitrarily large, or arbitrarily close to zero, if needed.
\end{remark}

\section{Deformation to $C$-normal form}\label{sec:Cnormal}

From now onward we shall freely adopt and employ the language of manifolds with corners, cf. \cite{Joy12} and references therein. Given $M^{n+1}$ a manifold with corners, of dimension $n+1\geq 3$, its singular locus (henceforth denoted $\text{sing}(M)$) is here understood as the set of points around which $M$ is not modelled by $\R^{n+1}$ or $\R^{n}\times\R^+$ (for $\R^+=[0,\infty)$); a face is understood as the closure of a connected component of $\partial M\setminus \text{sing}(M)$. In that setting we will say that $F$ is a cylindrical face (and that $M$ is cylindrical about $F$) if there exists an open set $U\supset F$ that is mapped diffeomorphically (in the category of manifolds with corners) to a product $X\times \R^+$ for some smooth, compact manifold with boundary $X$. (Note that this happens if and only if $\text{sing}(M)\cap X$ is a smooth codimension-two submanifold of $M$.)

To proceed, note that Lemma \ref{lem:Harmonic} and Proposition \ref{prop:NormalForm} can conveniently be applied to this setting. So let $M$ be as above, let $F$ be a cylindrical face and assume we are given a Riemannian metric $g$ on $M$ satisfying the assumption 
\[
F \ \text{meets the adjacent face(s) at a right angle} \quad \quad (\boldsymbol{\perp}).
\]
Then we can write $g$, in a neighborhood of $F$ as $g(x,t)=u^2(x,t)dt^2+h_t(x)$ where say $0\leq t\leq t_0$, for smooth $u$ and $h$. In fact, all deformations we present in the present section, as well as the in the next one, are purely local and so - for the sake of simplicity - we will assume our ambient manifold is just $M=X\times \R^+$ (and, with abuse of notation, we let $F$ denote $X\times\left\{0\right\}$, so that the adjacent faces are identified with $\partial X\times \R^+$).

We will work with Banach spaces (of real-valued functions, or tensors) of the form $C^{k,\alpha}(X)$; the length of a tensor being measured with respect to a background metric that we shall specify at due course. Furthermore, for given $t_0>0$ and $\ell\geq 0$ we will also deal with the ``parabolic'' counterpart, namely with the spaces $C^{\ell}([0,t_0], C^{k,\alpha}(X))$ under the same caveat as above; the corresponding norm will be denoted by
\[
\|\cdot\|_{C^{\ell}(t;C^{k,\alpha}(X))}
\]
with the interval $[0,t_0]$ specified once and for all. Given a smooth map $J\ni t\mapsto h_t\in\met(X)$ the first (respectively: second) derivatives with respect to the variable $t$ will be denoted by $\dot h_t$ (respectively: $\ddot h_t$); similarly, for $u=u(x,t)$ we write $\dot u_t$ and $\ddot u_t$ for the first and second derivatives in $t$.

That being said, by Lemma \ref{lem:Formulae} the second fundamental form and the mean curvature of a slice $\{t=constant\}$, taken with respect to the unit normal $-u^{-1}\partial_t$, are given by
\[\II_t = -(2u)^{-1} \dot h_t, \quad H_t = -(2u)^{-1}\tr_{h_t}\dot h_t.\]
Thus, 
\begin{align*}
    \dot H_t &= -\frac{d}{dt} \left(\frac{1}{2u} \tr_{h_t} \dot h_t\right)\\
             &= \left(\frac{\dot{u_t}}{2u^2}\right)\tr_{h_t}\dot h_t - \frac{1}{2u} |\dot h_t|^2 - \frac{1}{2u} \tr_{h_t} \ddot h_t,
\end{align*}
and (note here that the term involving the derivative of mean curvature changes sign compared to Lemma \ref{lem:Formulae}, where one works with respect to $\partial_t$)
\begin{equation}\label{eq:warped.BarHanke.scalar.curvature}
    \begin{split}
        R_g &= R_{h_t} - \frac{2\Delta_{h_t}u}{u} + \frac{2\dot H_t}{u} - H_t^2 - |\II_t|^2 \\
        &= R_{h_t} - \frac{2\Delta_{h_t}u}{u} + \frac{\dot{u_t}}{u^3} \tr_{h_t}\dot h_t - \frac{5}{4u^2}|\dot h_t|^2 - \frac{1}{u^2} \tr_{h_t} \ddot h_t - \frac{1}{4u^2}\left(\tr_{h_t}\dot h_t\right)^2.    
    \end{split}
\end{equation}

 Following \cite{BarHan20}, we then recall:
\begin{definition}\label{def:Cnormal}
    In the setting above, we say that a metric $g$ is $C$-normal near $F$, if it takes the form
    \[g(x,t) = u(x,t)^2 dt^2 + h_0(x) - t h_1(x) - Ct^2 h_0(x),\]
    in an open neighborhood of $F$.
\end{definition}

Here is the key statement we prove in this section:

\begin{proposition}\label{prop:C_normal_deformation}
    Given a manifold with corners $M$, a designated cylindrical face $F$ and a Riemannian metric $g$ satisfying $(\boldsymbol{\perp})$, an open neighborhood $U$ of $F$, and $\eta>0$,  there exists $C_0=C_0(g)$ such that, for every $C\ge C_0$ there exists a $C$-normal metric $\hat g$ on $M$ such that:
    \begin{enumerate}[(1)]
        \item $\hat g = g$ in $M\setminus U$;
        \item $\hat g|_F = g|_F$;
        \item $\hat\II = \II$ on $F$;
        \item $\hat g - g$ has no $dt$ factor in the coordinates $(x,t)$;
        \item $\|\hat g - g\|_{C^1(M)} < \eta$;
        \item $R_{\hat g}>R_g - \eta$.
    \end{enumerate}
\end{proposition}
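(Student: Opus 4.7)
The requirement $\hat{\II} = \II$ on $F$, combined with Lemma 2.1(1) (which reads $\II = (2u)^{-1}\dot h_t$), forces $h_1 = -\dot h_0|_{t=0}$, so the target $C$-normal form near $F$ is $\hat h_t = h_0 + t\dot h_0 - Ct^2 h_0$. The plan is to realize $\hat g$ by a single cutoff interpolation. Choose $t_0>0$ with $X\times[0,t_0]\subset U$ and a smooth non-increasing cutoff $\chi:[0,\infty)\to[0,1]$ which is $\equiv 1$ on $[0,\alpha t_0]$ and $\equiv 0$ on $[t_0,\infty)$, with $\dot\chi(0) = \dot\chi(t_0) = 0$ and a ``plateau ratio'' $\alpha\in(0,1)$ to be tuned; then define
\[
\hat h_t := \chi(t)\bigl(h_0 + t\dot h_0 - Ct^2 h_0\bigr) + (1-\chi(t))\,h_t,
\qquad \hat g := u^2\,dt^2 + \hat h_t,
\]
extended by $\hat g = g$ outside $X\times[0,t_0]$.

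Items (1), (2), (4) are immediate from the construction. For (3), note $\chi(0)=1$ and $\dot\chi(0)=0$ give $\dot{\hat h}|_{t=0} = \dot h_0$, hence $\hat\II=\II$ via Lemma 2.1(1). For (5), writing $\hat g - g = \chi(t)\,\psi(t)$ with $\psi := h_0+t\dot h_0-Ct^2 h_0 - h_t = -Ct^2 h_0 + O(t^2)$, one has $\|\hat g-g\|_{C^1(M)} \lesssim (1+C)\,t_0$, which is $<\eta$ once $t_0=t_0(C,\eta,g)$ is small enough.

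The crux is (6). Using the scalar curvature formula (3.1) and separating the $C$-dependent and $C$-independent pieces of $\ddot{\hat h}-\ddot h$, a direct computation yields
\[
R_{\hat g}-R_g \;=\; \frac{Cn}{u^{2}}\,(t^{2}\chi)''(t) \;+\; E(x,t),
\]
where the error $E$ collects ``bounded'' geometric remainders: $R_{\hat h_t}-R_{h_t}$, the change in $\Delta_{\hat h_t} u$, the cutoff-derived quantities $\chi\tr\ddot h_t$, $\ddot\chi\tr\psi_h$, $\dot\chi\tr(\dot h_0-\dot h_t)$ with $\psi_h := h_0+t\dot h_0-h_t = O(t^2)$, and the first-derivative contributions $-|\dot{\hat h}|^2/u^2$, $-(\tr\dot{\hat h})^2/(4u^2)$. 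On the plateau $[0,\alpha t_0]$ one has $(t^2\chi)''\equiv 2$, giving the large positive boost $+2Cn/u^{2}$; choosing $C_0 = C_0(g)$ large enough to dominate the $O(C^2 t^2)$ losses from $|\dot{\hat h}|^2$ then secures $R_{\hat g} \ge R_g$ there.

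The delicate part is the transition $[\alpha t_0,t_0]$, where $(t^2\chi)''$ can be negative. The key analytic input will be the elementary sharp bound $\min_{[\alpha t_0,\, t_0]}(t^{2}\chi)'' \ge -2\alpha/(1-\alpha)$ for a suitably chosen cutoff profile, which converts a potential $-O(C)$ loss into $-O(C\alpha)$ and thus $\ge -\eta/2$ for $\alpha \sim \eta/C$. The principal obstacle I expect is the coupled estimate showing that, under the same choice of $\alpha$ and $t_0$, the geometric error $E$ can also be made $\le\eta/2$ throughout the transition: naively $\ddot\chi = O(1/t_0^{2})$ and $\tr\psi_h = O(t^{2})$ pair to give $O(1)$ contributions, so one must exploit Taylor cancellations, the $O(t_0^{2}(1+C))$ control on $\|\hat h-h\|_{C^{2}_{x}}$, and a judicious concentration of $|\ddot\chi|$ away from the locus where $|\tr\psi_h|$ is maximal. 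Once this simultaneous estimate is in place, combining with the plateau analysis concludes (6).
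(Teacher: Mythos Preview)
Your strategy—interpolating with a single cutoff and splitting $R_{\hat g}-R_g$ into a dominant $C$-piece and a bounded remainder—is close in spirit to the paper's, but the argument for item (6) breaks down in the transition region, and the proposed fixes do not close the gap.

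The first problem is the ``elementary sharp bound'' $\min_{[\alpha t_0,t_0]}(t^2\chi)''\ge -2\alpha/(1-\alpha)$. For any smooth $\chi$ with $\chi\equiv 1$ on $[0,\alpha t_0]$ and $\chi\equiv 0$ on $[t_0,\infty)$, convexity of $\rho+\tfrac{K}{2}(t_0-t)^2$ (where $\rho=t^2\chi$) applied with the four endpoint conditions already forces $K=-\min\rho''\ge 2\alpha(2-\alpha)/(1-\alpha)^2$, worse than your constant. More importantly, even if one designs $\chi$ so that $K=O(\alpha)$, this is not what you actually need. The relevant pointwise decomposition reads
\[
R_{\hat g}-R_g \;\approx\; \frac{Cn}{u^2}\,(t^2\chi)'' \;+\; \frac{\chi}{u^2}\,\tr_{h_0}\ddot h_0 \;+\; \bigl(\text{terms in }\chi',\chi''\text{ paired with }\dot\psi_h,\psi_h\bigr),
\]
and the middle term $\frac{\chi}{u^2}\tr\ddot h_0$ is a fixed $O(1)$ quantity with no sign control. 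At points of the transition where $(t^2\chi)''$ sits near its minimum $-O(\alpha)$ but $\chi$ is still close to $1$, the $C$-term contributes only $-O(\eta)$ and cannot absorb $\frac{\chi}{u^2}\tr\ddot h_0$ when $\tr\ddot h_0<0$. Your ``Taylor cancellations'' cannot help here, since $\psi_h=-\tfrac{t^2}{2}\ddot h_0+o(t^2)$ has no hidden structure; and any ``concentration of $\ddot\chi$'' near small $t$ hits the scaling wall $|\chi''|\cdot t^2=O(1)$ for polynomial-type cutoffs.

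The paper resolves this with two ideas you are missing. First, it uses a \emph{logarithmic} cutoff $\tau_{\delta,\varepsilon}$ (Lemma~3.3) satisfying $|\tau^{(\ell)}(t)|\le C_\ell\,t^{-\ell}|\log\delta|^{-1}$, so that cutting off an $O(t^2)$ tensor perturbs the metric by only $O(|\log\delta|^{-1})$ in $C^2$, uniformly in $t$. Second, rather than estimating $R_{\hat g}-R_g$ directly, it freezes $s=\tau_{\delta,\varepsilon}(t_0)$ at each point and compares $\hat g$ to the auxiliary metric $g^{(s)}=g-s\bigl(\tfrac12 t^2(\ddot h_0+2Ch_0)+Q_t\bigr)$. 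The family $\{g^{(s)}\}_{s\in[0,1]}$ satisfies $R_{g^{(s)}}(x,0)\ge R_g(x,0)$ once $2nC\ge -\tr\ddot h_0$ (this is precisely where the troublesome $\tr\ddot h_0$ gets absorbed by $C$), and a continuity-plus-compactness argument extends this to $R_{g^{(s)}}>R_g-\eta/2$ on a fixed strip, \emph{uniformly in $s$}. The log cutoff then makes $\|\hat g-g^{(s)}\|_{C^2}$ small enough to conclude. Without a log-type cutoff and this frozen-comparison device, your scheme cannot simultaneously control the $C$-term and the $\tr\ddot h_0$ contribution on the transition.
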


(In item (2) we really mean the restriction of both metrics, for each point of $F$, to the subspace of tangent vectors to $F$ itself; in item (4) we mean that in the local coordinates $(x,t)$ the coefficient of $dt^2$ or any $dtdx^i$ for the tensor $\hat g - g$ is identically equal to zero in $U$.)

We need the following lemma concerning the design of a suitable logarithmic cutoff function.

\begin{lemma}\label{lem:BarHanke_testfunction}
    For any $\delta\in (0,1/4)$, $\varepsilon\in (0,1)$, there exists a $C^\infty$ function $\tau_{\delta,\varepsilon}:\R\to \R$ such that:
    \begin{enumerate}[(1)]
        \item $\tau_{\delta,\varepsilon}=1$ when $t\le \delta\varepsilon$.
        \item $\tau_{\delta, \varepsilon}=0$ when $t\ge \varepsilon$.
        \item $0\le \tau_{\delta,\varepsilon}\le 1$ for all $t\in \R$.
        \item For every positive integer $\ell$, there is a constant $C_{\ell}>0$ such that for all $t>0$, 
        \[\left| \tau_{\delta,\varepsilon}^{(\ell)}(t)\right|\le C_{\ell} \cdot t^{-\ell}\cdot |\log\delta|^{-1}.\]
    \end{enumerate}
\end{lemma}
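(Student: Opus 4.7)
The plan is to use the classical logarithmic-scale cutoff construction. I would fix, once and for all, a smooth $\phi\colon\R\to[0,1]$ with $\phi\equiv 0$ on $(-\infty,0]$ and $\phi\equiv 1$ on $[1,\infty)$ (a standard mollification of the step function), and then set
\[
\tau_{\delta,\varepsilon}(t)\eqdef \phi\!\left(\frac{\log(t/\varepsilon)}{\log\delta}\right)\qquad\text{for }t\in(\delta\varepsilon,\varepsilon),
\]
extended to be identically $1$ on $(-\infty,\delta\varepsilon]$ and identically $0$ on $[\varepsilon,+\infty)$. Since $\log\delta<0$, the map $s(t)\eqdef\log(t/\varepsilon)/\log\delta$ is a strictly decreasing $C^\infty$ diffeomorphism from $(\delta\varepsilon,\varepsilon)$ onto $(0,1)$, which immediately takes care of items (1)--(3), as well as of the $C^\infty$ matching across the two gluing points $t=\delta\varepsilon$ and $t=\varepsilon$.

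For item (4), the essential computation is
\[
s^{(k)}(t)=\frac{(-1)^{k-1}(k-1)!}{t^{k}\log\delta},\qquad k\ge 1,
\]
so each derivative of $s$ picks up exactly one factor of $|\log\delta|^{-1}$ together with $t^{-k}$. Applying Fa\`a di Bruno's formula to $\tau_{\delta,\varepsilon}=\phi\circ s$ gives
\[
\tau_{\delta,\varepsilon}^{(\ell)}(t)=\sum_{k=1}^{\ell}\phi^{(k)}(s(t))\,B_{\ell,k}\!\bigl(s'(t),\dots,s^{(\ell-k+1)}(t)\bigr),
\]
where $B_{\ell,k}$ denotes the $(\ell,k)$-partial Bell polynomial, and each of its monomials is a product of exactly $k$ factors $s^{(j_i)}$ with $\sum_i j_i=\ell$. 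Each such monomial is therefore bounded by a constant depending only on $\ell$ times $t^{-\ell}|\log\delta|^{-k}$. The hypothesis $\delta<1/4$ forces $|\log\delta|>\log 4>1$, so $|\log\delta|^{-k}\le|\log\delta|^{-1}$ for every $k\ge 1$; combined with the uniform bound on the $\phi^{(k)}$'s (each being compactly supported in $[0,1]$), this delivers
\[
|\tau_{\delta,\varepsilon}^{(\ell)}(t)|\le C_\ell\,t^{-\ell}\,|\log\delta|^{-1}
\]
inside $(\delta\varepsilon,\varepsilon)$, and trivially outside, where $\tau_{\delta,\varepsilon}^{(\ell)}\equiv 0$.

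I do not foresee any serious analytic obstacle: the whole argument reduces to a one-variable chain-rule calculation, with the choice of the logarithmic variable $s(t)$ being the only non-obvious ingredient. The point of the construction, and presumably the reason for singling it out as a separate lemma, is precisely that the gain of the factor $|\log\delta|^{-1}$, which can be made arbitrarily small by sending $\delta\to 0$, is what will later allow one to absorb unwanted cross terms in the scalar-curvature expansion of the $C$-normal deformation of Section \ref{sec:Cnormal}, while simultaneously keeping the $t^{-\ell}$-singular behaviour of the derivatives integrable over the relevant $t$-support.
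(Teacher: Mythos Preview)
Your argument is correct. The paper does not supply its own proof of this lemma, merely citing \cite[Appendix B]{BarHan22local}; your logarithmic change of variable $s(t)=\log(t/\varepsilon)/\log\delta$ composed with a fixed smooth step function is the standard construction and almost certainly coincides with that reference.
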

For a detailed proof, see \cite[Appendix B]{BarHan22local}.

\begin{proof}[Proof of Proposition \ref{prop:C_normal_deformation}]
    Take the Taylor expansion of $h_t$, seen as a $C^2$ map $t\mapsto h_t\in C^2(M,h_0)$: this reads
    \[h_t(x) = h_0(x) + t\dot h_0(x) + \frac12 t^2 \ddot h_0(x)+Q_t(x),\]
    where the remainder term $Q_t$ satisfies $\|Q_t\|_{C^2(M,h_0)} = o(t^2)$ as $t\to 0^+$.

    We then consider the following auxiliary metrics: for $s\in [0,1]$, define
    \[g^{(s)}(x,t) = g(x,t) - s\left(\frac12 t^2(\ddot h_0(x) + 2C h_0(x)) + Q_t(x)\right).\]
    $g^{(s)}$ is not the metric we need at the end of the construction, but serves as a good comparison for scalar curvature. For indeed, observe that for all $s\in [0,1]$, $g^{(s)}$ and $g$ have the same first order terms in $t$ and so, in particular, the scalar curvature formula above (equation \eqref{eq:warped.BarHanke.scalar.curvature}) implies that
    \[R_{g^{(s)}}(x,0) = R_{g}(x,0)+ \frac{s}{u^2}(\tr_{h_0} \ddot h_0+2n C)\ge R_{g}(x,0),\]
    where the last inequality holds provided that the constant $C$ is taken sufficiently large (depending on $h_0$ and $\ddot h_0$).

    Now let $\eta>0$ be assigned, as in the statement. By compactness of $X$, there exists $\varepsilon_0>0$ such that
    \begin{equation}\label{eq:PivotMetric}
    R_{g^{(s)}}(x,t)>R_g(x,0)-\frac12 \eta,\quad \forall s\in [0,1], x\in X\text{ and }t\in [0,\varepsilon_0].\end{equation}
    We define the metric $\hat g$ as follows:
    \[\hat g(x,t) = g(x,t) - \tau_{\delta,\varepsilon}(t) \left(\frac12 t^2 (\ddot h_0(x)+ 2C h_0(x))+Q_t(x)\right)\]
    for $0<\e<\e_0$ chosen small enough to fulfill property (1), depending on the assigned set $U$.
    Properties (2)-(3)-(4) follow immediately from the definition; property (5) can also be accommodated by possibly taking $\e$ even smaller, depending on $\eta$.
    
    Concerning (6), freeze a point $(x_0,t_0)$ with $0\leq t_0\leq \e$ (else there is nothing to prove), and let $s= \tau_{\delta,\varepsilon}(t_0)$. Then we have:
    \begin{multline*}
	2\|\hat g  - g^{(s)}\|_{C^2((x_0,t_0),g)} 
	\le \|(s-\tau_{\delta,\varepsilon}(t))( t^2 (\ddot h_0(x) + 2C          h_0(x))+2Q_t(x))\|_{C^2((x_0,t_0),g)} \\
 \le C |\log \delta|^{-1}.
    \end{multline*}
    Thus, by taking $\delta$ sufficiently small, it follows that 
    \[|R_{\hat g}(x,t) - R_{g^{(s)}}(x,t)|\le \frac12 \eta.\]
    We hence conclude, combining the previous inequality with \eqref{eq:PivotMetric}, that there holds indeed $R_{\hat g}(x,t)>R_g (x,0)-\eta$ as claimed.
\end{proof}

In particular, if the metric $g$ satisfies that $R_g>0$ and $H_g=0$ (resp. $H_g\ge 0$), we may locally deform it, inside a pre-assigned neighborhood $U$ of the base $F$, to a $C$-normal metric $\hat g$ such that $R_{\hat g}>0$ on $M$, $H_{\hat g}=0$ (resp. $H_{\hat g}\geq 0$) on $\partial M\setminus U$, and $|H_g|<\eta$ (resp. $H_g>-\eta$) on $(\partial M\setminus F)\cap U$, without changing its induced metric and the second fundamental form on $F$ . 

\section{Prescribing second fundamental form and warping factor}\label{sec:Prescribing}

Keeping in mind the output of the construction given in the previous section, we shall now assume to be given, on the manifold with corners $M=X\times\R^+$, a smooth metric $g$ that is $C$-normal and takes the form
\[g(x,t) = u(x,t)^2 dt^2 + h_0(x) - 2th_1(x) - Ct^2 h_0(x), \quad x\in X, t\ge 0\]
in an open neighborhood $U$ of $X$. For purely expository convenience let us assume that the function $u$ is extended to $X\times \R^+$.

\begin{proposition}\label{prop:deform_prescribe_II}
    Assume the setup above and that $R_g>0$ in $U$. Given a symmetric $(0,2)$-tensor $k$ on $X$ satisfying $\tr_{h_0}k\le \tr_{h_0}h_1$, there exists 
      $C_0=C_0(h_0, h_1,k,u)$ such that, for every $C\ge C_0$ there exist a metric $\hat g$ on $M$ and an open set $\emptyset\neq\hat U=\hat{U}(C)\subset U$ such that $\hat g=g$ in $M\setminus U$, \begin{equation}\label{eq:BarHanke_deformation1}
        \hat g= u(x,0)^2 dt^2 + h_0(x) - 2t k(x) -Ct^2 h_0(x)
    \end{equation}
    in $\hat U$, and satisfying:
    \[
    R_{\hat g}>0 \ \text{in} \ U, \text{and} \ 
        |H_{\hat g}-H_g|< \eta \ \text{on} \ (\partial M\setminus F)\cap U.
    \]
\end{proposition}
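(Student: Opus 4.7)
The plan is to construct $\hat g$ by interpolating pointwise between $g$ and the target metric
$$g_{*}(x,t) \eqdef u(x,0)^{2}dt^2 + h_0(x) - 2tk(x) - Ct^2 h_0(x)$$
via a cutoff function in the $t$-variable, in the spirit of Lemma \ref{lem:BarHanke_testfunction}. Setting $\chi \eqdef \tau_{\delta,\varepsilon}$ for $\delta, \varepsilon$ to be specified, I would define
$$\hat u(x,t) \eqdef \chi(t) u(x,0) + (1-\chi(t)) u(x,t),$$
$$\hat h_t(x) \eqdef h_0(x) - 2t\left[\chi(t) k(x) + (1-\chi(t)) h_1(x)\right] - Ct^2 h_0(x),$$
and $\hat g \eqdef \hat u^{2} dt^{2} + \hat h_t$. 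With $\hat U \eqdef X\times[0,\delta\varepsilon)$, one checks immediately that $\hat g = g_{*}$ on $\hat U$, that $\hat g = g$ outside $X\times[0,\varepsilon]$, and that the target form \eqref{eq:BarHanke_deformation1} is realized.

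The central estimate is the scalar curvature of $\hat g$ via \eqref{eq:warped.BarHanke.scalar.curvature}. The key observation is that the tensors $h_t$ and $A_t \eqdef h_0 - 2tk - Ct^2 h_0$ share both their $0$-th and $2$nd-order coefficients at $t=0$, so the tensor being interpolated, $A_t - h_t = 2t(h_1-k)$, vanishes to first order in $t$ (and analogously $u(x,0)-u(x,t)=O(t)$). A direct calculation yields
$$\ddot{\hat h}_t = -2C h_0 + 4\chi'(t)(h_1-k) + 2t\chi''(t)(h_1-k),$$
so that the term $-\tr_{\hat h_t}\ddot{\hat h}_t/\hat u^{2}$ in the scalar curvature formula produces the dominant positive contribution $2Cn/\hat u^{2}$, linear in $C$; together with $-4\chi'(t)\tr_{\hat h_t}(h_1-k)/\hat u^{2}$---non-negative by $\chi'\le 0$ and the trace hypothesis $\tr_{h_0}(h_1-k)\ge 0$---and an a priori unsigned cutoff-error $-2t\chi''(t)\tr_{\hat h_t}(h_1-k)/\hat u^{2}$. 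All remaining contributions to $R_{\hat g}$, coming from $\Delta_{\hat h_t}\hat u$, $\dot{\hat u}/\hat u^{3}\cdot\tr\dot{\hat h}_t$, $|\dot{\hat h}_t|^{2}$, and $(\tr\dot{\hat h}_t)^{2}$, involve the cutoff $\chi$ only through $\chi$ and $\chi'$ paired with the $O(t)$-tensors $A_t - h_t$ and $u(x,0)-u(x,t)$, hence are bounded by constants depending only on $h_0, h_1, k$ and the $C^{2}$-norm of $u$ (independent of $C$), thanks to Lemma \ref{lem:BarHanke_testfunction}(4).

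To conclude, one first picks $\varepsilon>0$ small enough that $X\times[0,\varepsilon]\subset U$ and then chooses $\delta\in(0,1/4)$ so that, on the transition region $[\delta\varepsilon,\varepsilon]$, the unsigned cutoff-error $\chi''$-term is controlled; the dominant $2Cn/\hat u^{2}$ budget then absorbs all remaining contributions provided $C\ge C_0(h_0, h_1, k, u)$ is taken sufficiently large. This yields (1). For (2), Lemma \ref{lem:Formulae}(4) gives $H_{\hat g}(x,t)= H_{\hat h_t}(x)+\partial_{\nu_{\hat h_t}}\log\hat u(x,t)$; since $\hat h_0 = h_0$ and $\hat u(\cdot,0) = u(\cdot,0)$, one has $H_{\hat g}(\cdot,0) = H_g(\cdot,0)$, and smoothness of both $\hat h_t, \hat u$ and $h_t, u$ ensures $|H_{\hat g} - H_g|$ is bounded by a constant multiple of $\varepsilon$ on the cylindrical part of $X\times[0,\varepsilon]$, giving (2) upon further shrinking $\varepsilon$. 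The principal technical obstacle is precisely the control of the unsigned $\chi''$-contribution: because the interpolation tensor $A_t - h_t$ vanishes only to first order in $t$---not to second order, as in the setting of Proposition \ref{prop:C_normal_deformation}---the logarithmic estimates of Lemma \ref{lem:BarHanke_testfunction}(4) alone do not straightforwardly yield a small correction, and it is here that the trace hypothesis $\tr_{h_0}k \le \tr_{h_0}h_1$ must be leveraged, both to secure the favorable sign of the $\chi'$-contribution and to free up enough positivity budget in $2Cn/\hat u^{2}$ to dominate the $\chi''$-error on the transition annulus.
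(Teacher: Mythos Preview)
Your proposal has a genuine gap precisely at the point you identify as the ``principal technical obstacle,'' and the strategy you sketch does not close it. With $\chi=\tau_{\delta,\varepsilon}$, Lemma~\ref{lem:BarHanke_testfunction}(4) yields only the size bound $|t\chi''(t)|\le C_2\,t^{-1}|\log\delta|^{-1}$, with no sign information. Near the inner endpoint $t\sim\delta\varepsilon$ this is of order $(\delta\varepsilon|\log\delta|)^{-1}$, which diverges as $\delta\to 0$; keeping $\delta$ bounded away from $0$ instead makes the bound of order $\varepsilon^{-1}$, forcing $C_0$ to depend on $\varepsilon$ (hence on $U,\eta$), contrary to the stated dependence $C_0=C_0(h_0,h_1,k,u)$. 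The favorable $\chi'$-term is of the same order and cannot be shown to dominate from Lemma~\ref{lem:BarHanke_testfunction} alone; equivalently, that lemma gives no concavity for $t\mapsto t\,\tau_{\delta,\varepsilon}(t)$, which is what one would need to combine the two pieces into a single nonnegative contribution $-2(t\chi)''\tr_{h_0}(h_1-k)$.

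The paper avoids this by using a different cutoff, namely $\chi_\varepsilon$ from Lemma~\ref{lem:BarHanke_testfunction2}, and writes the slice metric as $h_0-2th_1+2\chi_\varepsilon(t)(h_1-k)-Ct^2h_0$ (so $\chi_\varepsilon$ plays the role of your $t\chi$, with $\chi_\varepsilon(t)=t$ for small $t$ and $\chi_\varepsilon=0$ for $t\ge\sqrt\varepsilon$). The key built-in property $\chi_\varepsilon''\le 0$ on $[0,\varepsilon]$, together with the trace hypothesis, gives $\chi_\varepsilon''\tr_{h_0}(h_1-k)\le 0$ directly; the residual $(\tr_{h_t}-\tr_{h_0})$-error is then $|\chi_\varepsilon''|\cdot\|h_t-h_0\|_{h_0}=O(\varepsilon^{-1})\cdot O(\varepsilon)=O(1)$ uniformly in $\varepsilon$ (under the coupling $\sqrt\varepsilon\,C\le 1$), while on $[\varepsilon,\sqrt\varepsilon]$ one simply uses $|\chi_\varepsilon''|\le c_0$. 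This yields $R\ge 2Cn/u^2-O(1)$ with the $O(1)$ genuinely independent of $\varepsilon$, so $C_0$ depends only on the data. The warping factor is then frozen in a separate second step via $\hat u(x,t)=u(x,\lambda_\varepsilon(t))$; your simultaneous interpolation of $u$ is not the problem---what is missing is the concave cutoff of Lemma~\ref{lem:BarHanke_testfunction2}.
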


We recall that $\partial M$ denotes the topological boundary of $M$, and $F$ is the notation we employ for $X\times\left\{0\right\}$.
In order to prove the previous statement, achieving the desired deformation, we will take the same cutoff function as in \cite[Lemma 25]{BarHan20}.

\begin{lemma}\label{lem:BarHanke_testfunction2}
    There exists a constant $c_0>0$ such that for each $\e\in (0,\frac12)$, there exists a smooth function $\chi_{\e}: [0,\infty)\to \R$ such that:
    \begin{enumerate}[(1)]
        \item $\chi_{\e}(t)=t$ for $0\leq t\leq \e/20$, $\chi_{\e}(t)=0$ for $t\ge \sqrt{\e}$, and $0\le \chi_{\e}(t)\le \e/2$ for all $t$;
        \item $|\chi_{\e}'(t)|\le c_0$;
        \item $-2/\e\le \chi_{\e}''(t)\le 0$ for all $t\in [0,\e]$ and $|\chi_{\e}''(t)|\le c_0$ for all $t\in [\e, \sqrt{\e}]$.
    \end{enumerate}
\end{lemma}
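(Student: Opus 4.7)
The plan is to construct $\chi_\e$ by prescribing its second derivative $\chi_\e''$ as a piecewise-smooth function on $[0,\sqrt{\e}]$ and integrating twice with initial data $\chi_\e(0)=0$, $\chi_\e'(0)=1$ (both forced by condition (1) on $[0,\e/20]$). I split $[0,\sqrt{\e}]$ into three regions: an \emph{identity} region $[0,\e/20]$ on which $\chi_\e''\equiv 0$; a \emph{concentration} region inside $[\e/20,\e]$ on which $\chi_\e''$ takes the value $-2/\e$ (modulo smooth transitions) and drives the slope from $1$ down to $0$; and a \emph{descent} region $[\e,\sqrt{\e}]$ on which $\chi_\e$ is brought smoothly to zero with $|\chi_\e''|$ bounded independently of $\e$. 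Fix once and for all a smooth template $\phi:\R\to[0,1]$ with $\phi\equiv 0$ on $(-\infty,0]$, $\phi\equiv 1$ on $[1,\infty)$ and bounded derivatives of every order.

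\emph{Concentration region.} For a parameter $a_\e\in(\e/2,\e)$, set on $[\e/20,\e]$
\[\chi_\e''(t)\;=\;-\frac{2}{\e}\,\phi\!\bigl(40(t-\e/20)/\e\bigr)\,\phi\!\bigl(40(a_\e-t)/\e\bigr),\]
so $\chi_\e''$ smoothly ramps to $-2/\e$ on a length-$\e/40$ window, holds there across the bulk of $[\e/20,a_\e]$, and returns smoothly to $0$ before $t=a_\e$. As $a_\e$ varies, the integral $\int_{\e/20}^{\e}\chi_\e''$ depends continuously on $a_\e$ and sweeps an interval containing $-1$, so by the intermediate value theorem I can choose $a_\e\approx 11\e/20$ for which $\int_{\e/20}^{\e}\chi_\e''=-1$ exactly, whence $\chi_\e'(\e)=0$. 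Then $\chi_\e'$ monotonically decreases from $1$ to $0$ on $[\e/20,a_\e]$ and remains $0$ on $[a_\e,\e]$; an elementary integration using the essentially linear slope profile on $[\e/20,a_\e]$ shows the peak value satisfies $h_\e:=\chi_\e(\e)=\chi_\e(a_\e)\le 3\e/10<\e/2$.

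\emph{Descent region.} Fix a single smooth model $\psi:[0,1]\to\R$ vanishing to infinite order at both endpoints, normalized by $\int_0^1\psi=0$ and $\int_0^1\!\int_0^\sigma\psi(r)\,dr\,d\sigma=-1$, and set
\[\chi_\e''(t)\;=\;\frac{h_\e}{(\sqrt{\e}-\e)^2}\,\psi\!\bigl((t-\e)/(\sqrt{\e}-\e)\bigr),\qquad t\in[\e,\sqrt{\e}].\]
The normalization of $\psi$ forces $\chi_\e'(\sqrt{\e})=0$ and $\chi_\e(\sqrt{\e})=0$, while the scaling factor $h_\e/(\sqrt{\e}-\e)^2\le(\e/2)/(\sqrt{\e}-\e)^2=O(1)$ for $\e<1/2$ yields $|\chi_\e''|\le c_0$ on this interval; integrating once gives $|\chi_\e'|=O(\sqrt{\e}-\e)\le c_0$. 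Extending by $\chi_\e\equiv 0$ on $[\sqrt{\e},\infty)$ yields a globally $C^\infty$ function since $\psi$ vanishes to all orders at $1$.

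Condition (1) is then built in piece by piece, with the height bound coming from the peak estimate $h_\e<\e/2$; condition (2) follows on $[0,\e]$ from $\chi_\e'\in[0,1]$ (monotone decrease from $1$ to $0$, guaranteed by the pointwise sign of $\chi_\e''$) and on $[\e,\sqrt{\e}]$ from the descent estimate; condition (3) holds on $[0,\e]$ pointwise by construction of $\chi_\e''$ and on $[\e,\sqrt{\e}]$ from the uniform bound on the rescaled profile. I expect the delicate point to be purely the numerical verification of the peak estimate $h_\e\le\e/2$ and the intermediate-value choice of $a_\e$; both are elementary but require careful bookkeeping, and are entirely in the spirit of the explicit design of $\tau_{\delta,\e}$ in the preceding lemma.
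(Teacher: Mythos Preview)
The paper does not supply its own proof of this lemma; it is quoted from \cite[Lemma~25]{BarHan20}, so there is nothing to compare against. Your three-region construction (identity / concentration / descent) is the natural approach and is essentially correct: the intermediate-value choice of $a_\e$ and the peak estimate $h_\e<\e/2$ are both verifiable by the elementary bookkeeping you indicate (indeed one gets $a_\e<3\e/5$ and then $h_\e\le a_\e-\e/4<7\e/20$ by a first-moment bound).

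There is, however, one small but genuine gap in the descent region. Your only constraints on the model $\psi$ are $\int_0^1\psi=0$ and $\int_0^1\!\int_0^\sigma\psi=-1$; these force $\chi_\e'(\sqrt{\e})=0$ and $\chi_\e(\sqrt{\e})=0$ but do \emph{not} control the sign of $\chi_\e'(t)=\tfrac{h_\e}{\sqrt{\e}-\e}\,\Psi\bigl((t-\e)/(\sqrt{\e}-\e)\bigr)$, where $\Psi(\sigma)=\int_0^\sigma\psi$. If $\Psi$ is positive on some subinterval (perfectly consistent with your normalization), then $\chi_\e$ overshoots $h_\e$; if $\Psi$ is very negative and then positive, $\chi_\e$ can dip below $0$. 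Either possibility breaks the two-sided bound $0\le\chi_\e\le\e/2$ in condition~(1), which you justify only by the peak estimate on $[0,\e]$. The fix is immediate: impose additionally that $\Psi\le 0$ on $[0,1]$ (for instance take $\psi$ to be a negative bump followed by a positive bump of equal mass), so that $\chi_\e$ decreases monotonically from $h_\e$ to $0$ on $[\e,\sqrt{\e}]$ and hence stays in $[0,h_\e]\subset[0,\e/2]$.
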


\begin{proof}[Proof of Proposition \ref{prop:deform_prescribe_II}]

Employing the test function $\chi_{\e}(t)$ constructed in Lemma \ref{lem:BarHanke_testfunction2}, we first define an auxiliary metric $\tilde g$ as
    \[\tilde g(x,t) = u(x,t)^2 dt^2 + h_0(x) -2t h_1(x) + 2\chi_{\e}(t)(h_1(x) - k(x)) - Ct^2 h_0(x),\]
    when $t\le \sqrt{\e}$, and $\tilde g=g$ when $t\ge \sqrt{\e}$; here $\e$ is chosen small enough to ensure that this interpolation occurs inside the assigned open set $U$. It is clear that $\tilde g$ is $C$-normal  and has the desired form as in \eqref{eq:BarHanke_deformation1}, except for the constancy of the warping factor (which will be arranged later), in the neighborhood of $F$ determined by the condition $0\leq t\leq \e/20$. We verify that $\tilde g$ satisfies the conclusions on the scalar curvature and the mean curvature.

Indeed, writing $\tilde g = u(x,t)^2 dt^2 + h_t(x)$, we first note that the smooth metrics on $X$ given by $\{h_t\}_{t\in [0,\sqrt{\e}]}$ are uniformly close in the $C^2$ topology to $h_0$ as long as one imposes  $\sqrt{\e}C\leq 1$, which we henceforth assume throughout this proof. For later reference, let us spell out the $C^0$ estimate: one has for all $t\in [0,\sqrt{\e}]$
\begin{equation}\label{eq:hth0}
\|h_0-h_t\|_{h_0}\leq 2t(\|h_1\|_{h_0}+\|h_0\|_{h_0})+2|\chi_{\e}(t)|\|h_1-k\|_{h_0}
\end{equation}
which can uniformly be bound from above by
$4\sqrt{\e}(\|h_1\|_{h_0}+\|h_0\|_{h_0}+\|k\|_{h_0}).$

As $\e\to 0$, we have that $R_{h_t} = O(1)$ and $\frac{2\Delta_{h_t}u}{u}=O(1)$; here and throughout this proof $O(1)$ represents a constant that may change line from line, but is uniformly bound in $\e$, independently of $C$; similarly $O(\e)$ has the meaning of $\e\cdot O(1)$.

     Moreover, we have $\dot h_t = -2h_1 + 2\chi_{\e}'(t)(h_1-k)-2Cth_0$. Hence, applying a Linear Algebra estimate like that in \cite[Lemma 24]{BarHan20}, as long as $\e$ is sufficiently small so that $\|h_0-h_t\|_{h_0}<\frac12$ we have for all $t\in [0,\sqrt{\e}]$
    \begin{align*}
        |\tr_{h_t} \dot h_t|\le |\tr_{h_0}\dot h_t| + 2\|h_t- h_0\|_{h_0} \cdot \|\dot h_t\|_{h_0}=O(1)
    \end{align*}
    where the last bound holds thanks to the usual constraint $\sqrt{\e}C\leq 1$ and the uniform bound $|\chi_{\e}'(t)|\le c_0$ from Lemma \ref{lem:BarHanke_testfunction}.
    
    Next, we have that $\ddot h_t = 2\chi_{\e}''(t)(h_1-k) - 2Ch_0$. Thus, in a much similar fashion, we have that
    \[\tr_{h_t}(\chi_{\e}''(t)(h_1-k)) \le \chi_{\e}''(t)(\tr_{h_0}h_1 - \tr_{h_0}k ) + 2|\chi_{\e}''(t)|\|h_t-h_0\|_{h_0}\cdot \|h_1-k\|_{h_0},\]
    which we estimate distinguishing two cases.
    When $t\in [0,\e]$, note that $\chi_{\e}''\le 0$ and $\tr_{h_0} h_1\ge \tr_{h_0} k$. Thus, we can drop the first summand and have that
    \begin{align*}\tr_{h_t}(\chi_{\e}''(t)(h_1-k))&\le 2|\chi_{\e}''(t)| \|h_0-h_t\|_{h_0} \cdot \|h_1-k\|_{h_0}\\
    &\le \frac{4}{\e}\cdot 4\e (\|h_1\|_{h_0}+\|h_0\|_{h_0}+\|k\|_{h_0}) \|h_1-k\|_{h_0} =O(1)
    \end{align*}
    building upon \eqref{eq:hth0}.
    When instead $t\in [\e,\sqrt{\e}]$, we have that $|\chi_{\e}''(t)|\le c_0$, so $|\tr_{h_t}(\chi_{\e}''(t)(h_1-k))|=O(1)$. Hence for all $t\in [0,\sqrt{\e}]$, we have that
    $\tr_{h_t} (\chi_{\e}''(t)(h_1-k))= O(1)$.
    Note that for $t\in [0,\sqrt{\e}]$, we analogously have 
    \[|\tr_{h_t} (2Ch_0) - \tr_{h_0}(2Ch_0)|\le 2C\|h_t-h_0\|_{h_0}\|h_0\|_{h_0} = O(1).\]
    Thus, exploiting the leading contribution $\tr_{h_0} (2Ch_0)=2Cn$ (where, let us recall $n$ is the dimension of $X$) we conclude that for all $t\in [0,\sqrt{\e}]$,
    \begin{equation}
    \label{eq:ScalTildeg}
    R_{\tilde g} \ge \frac{2Cn}{u^2} - O(1)>0 
    \end{equation}
    provided we simply choose $C$ large enough; indeed we shall choose $C$ first and then $\e$ sufficiently small based on the aforementioned constraint $\sqrt{\e}C\leq 1$. (However, such choices will actually be made later for we still need to modify the warping factor.)

    Next, we verify that $\tilde g$ satisfies the mean curvature property along the cylindrical boundary, that is $\partial X\times \R^+$. This is easily seen. By item (4) of Lemma \ref{lem:Formulae}, we have that $H_{\tilde g} = u^{-1} (H_h u + \partial_\nu u)$. Since, as noted above, $h_t\to h_0$ as $t\to 0$ in the $C^2$ topology, and similarly $u_t\to u_0$ uniformly in the $C^1$ topology, we have that $H_{\tilde g}(x,t)\to H_g(x,0)$ uniformly for $x\in X, t\in [0,\sqrt{\e}]$ as $\e\to 0$ (plus of course by design the metric coincide as soon as $t\geq \sqrt{\e}$).

    To complete the construction we still need to modify the warping factor. With that goal in mind, we construct $\hat g$ as follows. Take a positive smooth function $\lambda_{\e}:[0,\infty)\to \R$ such that
    \[\lambda_{\e}(t) = \begin{cases}
        0 \quad \text{if} \ t\le \e^2 \\ 
        t \quad \text{if} \ t\ge \frac{\e}{2}
    \end{cases},\quad 0\le \lambda_{\e}'(t)\le 2.\]
    Then define $\hat u(x,t) = u(x,\lambda_{\e}(t))$, and in turn
    \[\hat g = \hat u(x,t)^2 dt^2 + h_0(x) - 2th_1(x) +2\chi_{\e}(t) (h_1(x)-k(x)) -Ct^2h_0(x).\] 
    It is apparent that $\hat g$ satisfies \eqref{eq:BarHanke_deformation1} in $\hat U:=\left\{(x,t)\in X\times \R^+ \ : \ 0\leq t<\e^2 \ \right\}$. To verify the conditions on the scalar curvature and mean curvature of $\hat g$, we compare $\hat g$ and $\tilde g$ when $t\le \e$; indeed for $t\geq \e/2$ one has $\hat{g}=\tilde{g}$ and so we can just rely on the preceding part of the proof.

 Regarded both $\hat u$ and $u$ as differentiable mappings from $t\in \R$ to the Banach space $C^2(X)$, namely considering $\hat{u}, u\in C^1([0,\e];C^2(X))$ we have that $\|\hat u- u\|_{C^0(t; C^{2}(X))}=O(\e)$, and $\|\hat u - u\|_{C^1(t;C^{2}(X))}=O(1)$ (i.\,e. the difference is bounded uniformly in $\e$). Thus, by inspecting the various summands of \eqref{eq:warped.BarHanke.scalar.curvature} it is easily checked that $R_{\hat g}\ge R_{\tilde g} - O(1)$; in particular, let us remark that here one has to deal with the term
 \[
\left(\frac{1}{\hat{u}^2}-\frac{1}{u^2}\right)\tr_{h_t}{\ddot h_t}=\frac{(u-\hat{u})(u+\hat{u})}{(u\hat{u})^2}\tr_{h_t}{\ddot h_t}
 \]
 whose boundedness (in fact: smallness) builds upon the fact that $\e C=\sqrt{\e}(\sqrt {\e} C)\leq \sqrt{\e}$ by virtue of the usual constraint.

 As a result, it follows from the aforementioned inequality $R_{\hat g}\ge R_{\tilde g} - O(1)$ and the preceding bound \eqref{eq:ScalTildeg} that taking $C$ sufficiently large depending on the input data only, we can arrange $R_{\hat g}>0$. The argument for the mean curvature of the cylindrical boundary is similar in spirit but even simpler in practice: just based on the fact that $\|\hat u- u\|_{C^0(t; C^{2}(X))}=O(\e)$ we conclude that $H_{\hat g}(x,0)$ is $\e$-close to $H_{\tilde g}(x,0)$ and so for suitably small $\e$, also depending on the assigned $\eta>0$, we can conclude the proof.
    \end{proof}

    So, in short, the proposition above allows to \emph{push down} the mean curvature (by an arbitrary, not necessarily small amount) while keeping the ambient scalar curvature positive; in the meantime one can also accomodate the convenient condition of making the warping factor independent of $t$ (namely: independently of the slice in the local foliation) near the interface. Such features are very convenient when it comes to proving our main result below.  We also wish to add a more technical comment on the preceding construction.

\begin{remark}\label{rem:Miracle}
Let us consider the outcome of the construction above in the annular domain $\hat U$ where $0\leq t<\e^2$. In that region, if $k=0$ there holds $\hat g=u^2(x,0)dt^2+(1-Ct^2)h_0$. In particular, set $h=(1-Ct^2)h_0$ the mean curvature of the cylindrical boundary at a point of coordinates $(x,t)$ equals
\[
H_{h}(x,t)+\partial_{\nu_h}\log u(x,0).
\]
But, just by scaling, $\nu_h(x,t)=\nu_h(x,0)/\sqrt{1-Ct^2}$ and similarly $H_h(x,t)=H_h(x,0)/\sqrt{1-Ct^2}$. As a result, if $H_g=0$, or respectively $H_g\geq 0$, along $\partial X\times \left\{0\right\}$ then the same conclusion shall, respectively, hold true for $\hat{g}$ in the whole domain $\hat U$ in question.
\end{remark}

  \section{Statement and proof of the main result}\label{sec:Desing}

    In this section we wrap up things to prove our main result:

\begin{theorem}\label{thm:Desing}
    Let $(M_{-},g_{-})$ and $(M_+,g_+)$ be (smooth) compact Riemannian manifolds with corners, both having dimension $n+1\geq 3$; assume $F_{-}\subset M_{-}$ (respectively: $F_{+}\subset M_{+}$) are cylindrical faces (as defined in the first paragraph of Section \ref{sec:Cnormal}) and there exists $\phi: M_{-}\to M_{+}$ giving an isometry between $[g_{-}]_{|F_{-}}$ and $[g_{+}]_{|F_{+}}$. Let $M:=M_{-}\sqcup_{\phi}M_{+}$, with its natural atlas of manifold with corners and let $\pi_{\pm}: M_{\pm}\to M$ be the corresponding projection maps (for either consistent choice of signs). We shall then introduce the following notation:
    \begin{itemize}
    \item $X$ is the codimension-one submanifold that is the common image of $F_{+}, F_{-}$ in $M$;
    \item $Y_{\pm}\subset\partial M_{\pm}$ is the disjoint union of all faces of $M_{\pm}$ having non-empty intersection with $F_{\pm}$, and $Y$ the disjoint union of all faces of $M$ having non-empty intersection with $X$.
    \end{itemize}

    Suppose that $R_{g_\pm}>0$ on $M_{\pm}$, that $Y_{\pm}\subset \partial M_\pm$ are mean-convex, that $Y_{\pm}$ meet ${F_{\pm}}$ at a right angle, and in addition there holds for the mean curvature of the isometric faces
    \begin{multline}\label{eq:JumpCond}
H_{g_{-},F_{-}}\ge f(x), \quad H_{g_{+},F_{+}}\ge -f(x), \\
\text{where \underline{either}} \ f(x)\geq 0 \ (\forall x\in X)  \ \ \ \text{\underline{or}} \ f(x)\leq 0 \ (\forall x\in X).
\end{multline}
      Given an open neighborhood $U$ of $X$ in $M$, such that $U\cap \partial M\subset Y$ (thus disjoint from $\text{sing}(M)$), there exist a Riemannian metric $g$ on $M$ and an open set $\hat U\subset U$ such that the restriction of $g$ to $M\setminus \hat U$ satisfies $\pi^{\ast}_{\pm} g = g_\pm$ on $M_\pm$, and in $U$ the following two properties hold:
\begin{enumerate}[(1)]
        \item $(M,g)$ has positive scalar curvature;
        \item $(M,g)$ has mean-convex boundary, and in fact minimal boundary if the same is assumed to be true for $Y_{-}, Y_{+}$ respectively in $(M_{-},g_{-})$ and $(M_+,g_+)$.
    \end{enumerate}
\end{theorem}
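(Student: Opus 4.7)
The plan is to reduce $g_\pm$ to a very special form in a collar of $X$ and then join the two sides via a Miao-type desingularization. First, Proposition \ref{prop:NormalForm} provides a warped-product form $g_\pm=u_\pm(x,t)^2 dt^2+h_{\pm,t}(x)$ on one-sided collars of $F_\pm$. Then Proposition \ref{prop:C_normal_deformation}, applied with the same large constant $C$ on both sides, replaces each $g_\pm$ by a $C$-normal metric in a neighborhood of $F_\pm$, losing arbitrarily small amounts of scalar curvature and of mean curvature along $Y_\pm$ and preserving the induced metric and the second fundamental form on $F_\pm$. Finally, Proposition \ref{prop:deform_prescribe_II}, applied on each side with $k_\pm$ chosen to preserve the second fundamental form of $F_\pm$ (the equality case of the admissible constraint $\tr_{h_0}k_\pm\leq \tr_{h_0}h_{\pm,1}$), brings the metric, in an inner annulus $\hat U_\pm\subset U$, to a form in which the warping factor $u_\pm(x,0)$ is independent of the slice parameter $t$; positivity of $R$ is preserved inside $U$ and, via Remark \ref{rem:Miracle} together with the orthogonality $(\boldsymbol{\perp})$, the appropriate sign of the mean curvature of $Y_\pm$ survives the passage to $\hat U_\pm$.

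\textbf{Step 1 (matching the warping factors).} The only remaining obstruction to a Lipschitz gluing of $M_\pm$ across $X$ is that $u_-(\cdot,0)$ and $u_+(\cdot,0)$, as smooth positive functions on $X$, need not coincide. To resolve this, one inserts a thin transition slab in which a new warping factor $w(x,t)$ interpolates smoothly in $t$ from $u_-(x,0)$ to $u_+(x,0)$, while the family of slice metrics stays unchanged. Reading off $R$ via \eqref{eq:warped.BarHanke.scalar.curvature}, the large positive contribution $-\tr_{h_t}\ddot h_t/w^2\sim 2Cn/w^2$ competes with the negative terms $-2\Delta_x w/w$ and $\dot w\,\tr_{h_t}\dot h_t/w^3$ (the latter of size $\sim C\dot w\cdot t$); the delicate interpolating function constructed in Appendix \ref{app:GluingFunction} is fine-tuned so that, for $C$ large and the slab width correspondingly small, these negative contributions are uniformly dominated by the $O(C)$ positive term and $R>0$ survives throughout the slab. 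Simultaneously, the trace of $w$ along $\partial X$ is chosen so that, by Lemma \ref{lem:Formulae}(4), $\partial_\nu\log w$ combines with the mean curvatures of the new boundary slices to retain the sign, or strict vanishing in the minimal case, of the cylindrical mean curvature.

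\textbf{Step 2 (Miao-type smoothing).} After Step 1 the induced metrics and the warping factors agree across $X$, and the Miao jump condition $H_-+H_+\geq 0$ on mean curvatures holds pointwise by \eqref{eq:JumpCond} independently of the sign of $f$. The glued Lipschitz metric thus carries a non-negative distributional contribution to scalar curvature supported on $X$, and a fiberwise mollification in the $t$-variable followed by a localized conformal deformation in the Miao spirit produces the sought smooth metric $g$ with $R_g>0$. The $t$-independence of the warping factor near $X$, together with the orthogonality $(\boldsymbol{\perp})$ and the mechanism of Remark \ref{rem:Miracle}, allow the conformal factor to be chosen with vanishing normal derivative along the cylindrical boundary, so that mean-convexity (resp. minimality) of $Y$ is preserved through the conformal step. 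Choosing all loss parameters from the preceding reductions smaller than a universal constant then yields the $g$ claimed in the statement.

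The main obstacle is clearly Step 1: the bespoke construction of the interpolating $w$ requires quantitative $C^2$ bounds calibrated to the $C$-normal structure, so that the negative spatial-Laplacian and cross-derivative contributions to scalar curvature are swallowed by the $O(C)$ positive bulk term, while simultaneously complying with the boundary mean-curvature constraint along $\partial X$.
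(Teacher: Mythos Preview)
Your outline captures the two-step architecture and correctly locates the main difficulty in Step~1, but the way you deploy Proposition~\ref{prop:deform_prescribe_II} and set up the interpolation misses the key mechanism, and this produces a genuine gap.

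You apply Proposition~\ref{prop:deform_prescribe_II} in the equality case $k_\pm=h_{\pm,1}$, merely to freeze the warping factor in~$t$. The paper instead uses it to push the second fundamental form to the \emph{specific} tensor $k_\pm=\tfrac{1}{n}u_\pm(x)f(x)h_0$, so that near $X$ one has $g_\pm=u_\pm^2\,dt^2+h_0-\tfrac{2}{n}tu_\pm f\,h_0-Ct^2h_0$. The whole point is that the linear-in-$t$ coefficient of $h_t$ is now proportional to the warping factor; the interpolation then sets $\log u_t=(1-\varphi_\varepsilon/\varepsilon)\log u_-+(\varphi_\varepsilon/\varepsilon)\log u_+$ \emph{and simultaneously} $h_t=h_0-\tfrac{2}{n}tu_t f\,h_0-Ct^2h_0$, so the slice metrics are \emph{not} kept fixed. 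With this choice $u_t^{-1}\dot h_t$ stays $O(1)$, and the dangerous second-order contribution reduces to $\tfrac{2}{n}\log(u_+/u_-)\,f\cdot\tfrac{1}{\varepsilon}(t\varphi_\varepsilon')'$, which is bounded from below precisely via items~(4) and~(6) of Lemma~\ref{lem:test.function.for.gluing} together with the sign hypothesis on $f$ and the arranged inequality $u_-<u_+$. If instead you keep $h_t$ fixed with a generic linear term $h_1$, the combination $\dot w\,\tr\dot h/w^3-\tr\ddot h/w^2=-w^{-1}\tr\tfrac{d}{dt}(w^{-1}\dot h)$ contains a summand $-2\dot w\,\tr h_1/w^3\sim -\tfrac{2\varphi_\varepsilon'}{\varepsilon w^2}\log(u_+/u_-)\,\tr h_1$; Lemma~\ref{lem:test.function.for.gluing} bounds only $t\varphi_\varepsilon'$ and $(t\varphi_\varepsilon')'$, not $\varphi_\varepsilon'$ itself (which is of order $\varepsilon^2/t$ away from $t=0$), so this term is uncontrolled and your scalar-curvature lower bound does not follow. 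Your description of the cross term as ``of size $\sim C\dot w\cdot t$'' records only the $-2Cth_0$ part of $\dot h_t$ and drops exactly this piece.

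Because of this, the paper's Step~1 already delivers a \emph{smooth} metric $\hat g$ on $M$ with $R_{\hat g}>0$: there is no Lipschitz interface, and no Miao mollification appears anywhere in the proof. The only residual defect is a small error $|H_{\hat g}|<\delta$ (resp.\ $H_{\hat g}>-\delta$) on the cylindrical boundary $Y$, accumulated from Propositions~\ref{prop:C_normal_deformation} and~\ref{prop:deform_prescribe_II}. Step~2 corrects this via a localized conformal change by a first eigenfunction of a problem with Robin data $\partial_\nu w+\tfrac{2}{c(n)}\chi(H_{\hat g})w=0$ on $Y$; a pure Neumann condition as you propose would not absorb the mean-curvature defect. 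The Miao jump condition you invoke in your Step~2 is never needed---and, after a one-sided modification of the warping factor, it is not clear it would even survive, since the face mean curvatures rescale with the new warping factor at $X$.
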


\begin{remark}
Some comments on the assumptions are appropriate:
\begin{itemize}
\item  condition \eqref{eq:JumpCond} coincides (given the different sign convention) with the jump condition $(\textbf{H})$ in \cite{Mia02}, together with the additional technical requirement that at least one of the two functions $H_{g_{+},F_{+}}, H_{g_{-},F_{-}}$ does not change sign;
\item we stress the construction we present is  local near the given interface, so the singularities of $M$ away from $X$ do not play any role; that said, an important special case occurs when
\[
\text{sing}(M_{-})\setminus F_{-}=\text{sing}(M_{+})\setminus F_{+}=\emptyset
\]
for in that case the output of the theorem is a smooth compact manifold with positive scalar curvature and mean-convex boundary (or minimal boundary under the same assumption on the input data); see Corollary \ref{cor:HalfSphere} for an example of that sort.
\end{itemize}
\end{remark}

\begin{remark}\label{Rem:GenVersionMainThm}
With rather minor modifications, we can in fact prove a more general version of the statement above, enforcing in $U$ a lower bound on the scalar curvature of the form $R_g>R_0$ for any assigned $R_0\in\R$, provided the same is required on the metrics $g_{-}, g_{+}$ (together with all other standing assumptions). The necessary changes to our arguments are outlined in Remark \ref{rem:ExtGenBounds}.
\end{remark}

Like we anticipated in the introduction, the proof of Theorem \ref{thm:Desing} consists of two steps. Step 1 is, at least formally, a preparatory step, which concerns how to best ``interpolate'' between two manifolds with corners having one isometric face of cylindrical type, in the sense we explained at the beginning of Section \ref{sec:Cnormal}; the main issue one needs to deal with is the jump at the level of the warping factor. 
  In the proof of this fact we will crucially employ the function $\varphi_{\e}$ that is the object of Lemma \ref{lem:test.function.for.gluing}; its construction, albeit elementary, is fairly technical, which is why we decided to postpone it to Appendix \ref{app:GluingFunction}. 

As a preliminary note, we further remark (recalling Section \ref{sec:NormalForm})
that there exists a smooth ($C^\infty$) function $t:M\to \R$ that allows to write both $g_{-}$ and $g_{+}$ in block form, in the sense of Proposition \ref{prop:NormalForm}; this relies on the very definition of the atlas on $M$ itself. In particular, $\{t\le 0\}$ (resp. $\{t\ge 0\}$) is identified with a small neighborhood of $F_{-}$ in $M_{-}$ (resp. of $F_{+}$ in $M_{+}$). We emphasize that, in the manifold $M$, the vector $\partial_t$ restricted to $X$ points out of (the projection of) $M_{-}$, into (the projection of)  $M_{+}$.

\begin{proof}

\textbf{Step 1:} under the assumptions of the statement, we will reach here the following preliminary conclusions:

\emph{given $\delta>0$ there exist a smooth Riemannian metric $\hat g$ on $M$ and an open set $\hat{U}\subset U$ such that the restriction of $g$ to $M\setminus \hat U$ satisfies $\pi^{\ast}_{\pm}\hat g = g_\pm$ on $M_\pm$, and in addition:
  \begin{enumerate}
        \item [(1)'] $R_{\hat g}>0$ in $U$, and
        \item [(2)'] $H_{\hat g}>-\delta$ on $\partial M \cap U$.
    \end{enumerate}
    Furthermore, in the special case when that $\partial M_\pm \setminus F_{\pm}$ are minimal, then conclusion (2)' above can be upgraded to the two-sided bound 
    $(2)'_{m}:$ $|H_{\hat g}|<\delta$ on $\partial M \cap U$.}

Without loss of generality, let us assume that $f\ge 0$ (the treatment of the case $f\leq 0$ only differs by the former in one point, which we will highlight at due course, the logical structure of the argument being identical). Applying Proposition \ref{prop:C_normal_deformation} and then Proposition \ref{prop:deform_prescribe_II}, we may assume, without loss of generality, that
    \[g_\pm = u_\pm (x)^2 dt^2 + h_0(x) - \frac{2}{n}t u_\pm (x) f(x) h_0(x) - Ct^2 h_0(x),\]
    when $|t|$ is sufficiently small; note that, because of such preliminary deformations the mean curvature of the cylindrical boundary $Y$ will suffer of a local error term, namely in the mean-convex (respectively: minimal) case
    \begin{equation}\label{eq:CylErrTerm}
H_{g_{\pm}}>-\delta/2 \ \ (\text{respectively}: \ |H_{g_{\pm}}|<\delta/2) \ \ \text{if} \ \ |t|<\e_0=\e_0(\delta),
    \end{equation}
    while we still have $H_{g_{\pm}}\geq 0$ outside of such a strip (or $H_{g_{\pm}}\equiv 0$ in the special case when the input metric are assumed to have minimal boundary along $Y_{\pm}$). Here it is convenient to restrict to the range $\delta\in (0,\delta_0]$, for some $\delta_0>0$ fixed once and for all, and  choose $\e_0(\delta_0)>0$ so small that $\left\{|t|\leq 4\e_0(\delta_0)\right\}\subset U$; we note that it follows from the proofs of Proposition \ref{prop:C_normal_deformation} and Proposition \ref{prop:deform_prescribe_II} that one can actually require $\delta\mapsto \e_0(\delta)$ to be non-decreasing so that in particular there holds $\left\{|t|\leq 4\e_0(\delta)\right\}\subset U$ for any $\delta\in (0,\delta_0]$.
    
    Observe that by suitably rescaling (cf. Remark \ref{rem:Scaling}), we may also assume, without loss of generality, that
    $u_{-}(x)< u_{+}(x)$ for all $x\in X$; if $f\leq 0$ we shall arrange for the reverse inequality instead.

    For any $\e>0$ sufficiently small,  $\e\in (0,\e_0(\delta)]$ subject to the general constraint $C\e\leq 1$ (that is henceforth enforced throughout the proof) and $t\in [0,\varepsilon]$, define $u_t:X\to \R$ by
    \[\log u_t(x) = \left(1-\frac{\varphi_{\e}(t)}{\varepsilon}\right) \log u_{-} + \frac{\varphi_{\e}(t)}{\varepsilon} \log u_{+},\]
    where $\varphi_{\e}(t)$ is the interpolation function constructed in Lemma \ref{lem:test.function.for.gluing} (cf. Appendix \ref{app:GluingFunction}), and
    \[\hat g (x,t)= \begin{cases}
        g_{-}(x,t)\quad & t\le 0;\\
        u_t(x)^2 dt^2+h_t  & 0\le t\le \varepsilon;\\
        g_{+}(x,t), & t\ge \varepsilon,
    \end{cases}\]
where for $t\in [0,\varepsilon]$ we set
\begin{equation}\label{eq:TransverseMetric}
h_t(x)=h_0(x) - \frac{2}{n} t u_t(x) f(x)h_0(x) - Ct^2 h_0(x).
\end{equation} 
    By definition (relying upon the defining properties of $\varphi_{\e}$), $\hat g$ is a smooth Riemannian metric on $M$, and conclusions (1)' and (2)' or $(2)'_m$ in the minimal case patently hold when $t\le 0$ or $t\ge \varepsilon$. We now check that it satisfies (1)' and (2)', or $(2)'_m$ when $t\in [0,\varepsilon]$; 
    we will check the latter requirement first. Observe that for all $t\in [0,\e]$ $u_t(x)$ is uniformly bounded in $C^2(X)$ and has a uniform \emph{positive} lower bound; note further that $h_t\to h_0$ as metrics on $X$ in the $C^2$ topology. 

    That being said, let us consider property $(2)'_m$. We start by noting that, keeping in mind \eqref{eq:CylErrTerm} and Lemma \ref{lem:Formulae}, along the interface there holds
    \[|H_{h_0}(x) + \partial_{\nu} (\log u_\pm)|< \frac{\delta}{2},\]
    where $\nu$ is - as usual - the outward unit normal vector on $\partial X$ in metric $h_0$. If we then introduce the auxiliary metric 
    \[
    \tilde g(x,t) = u_t(x)^2 dt^2+ h_0(x)\]
    we have, again for the cylindrical boundary along the interface, i.\,e. along $X\cap Y$
    \begin{align*}
     H_{\tilde g}(x,t) & = H_{h_0}(x)+\left(1-\frac{\varphi_{\e}(t)}{\varepsilon}\right) \partial_{\nu} \log u_{-} + \frac{\varphi_{\e}(t)}{\varepsilon} \partial_{\nu} \log u_{+} \\
     & =\left(1-\frac{\varphi_{\e}(t)}{\varepsilon}\right)(H_{h_0}+\partial_{\nu} \log u_{-})+\frac{\varphi_{\e}(t)}{\varepsilon}(H_{h_0}+ \partial_{\nu} \log u_{+}),
    \end{align*}
    whence convexity implies that $|H_{\tilde g}|<\delta/2$ on the strip defined by $0\leq t\leq\varepsilon$. Since the metrics $h_t$ and $h_0$ are $C^2$ close on $X$, we have that
    $|H_{\hat g}(x,t)|<\delta$ along that same strip
    provided $\varepsilon$ is taken sufficiently small depending further on $\delta$. The verification of (2)' follows along the same line, modulo working (both at the level of assumption and conclusion) with one-sided inequalities, i.\,e. lower bounds only.

    Let us then discuss the validity of part (1)' of the statement instead. Appealing again to the convergence $h_t\to h_0$ in $C^2(X)$, we have that 
    \[R_{h_t} = O(1),\quad\frac{\Delta_{h_t} u_t}{u_t}= O(1), \quad \text{uniformly as }\varepsilon\to 0.\]
    Note that in this proof, as in that of Proposition \ref{prop:deform_prescribe_II}, we use $O(1)$ to denote a bounded quantity that may change from line to line, but only depends on $u_\pm, f, h_0$ (but not on the constant $C$, which plays a specific role as clarified in Definition \ref{def:Cnormal}); similarly for $O(\e)=\e\cdot  O(1)$.

    To proceed further, keeping in mind \eqref{eq:warped.BarHanke.scalar.curvature}, we differentiate the map $t\mapsto h_t$ and find
    \begin{equation}\label{eq:FirstOrderDerH}
    \frac{1}{u_t} \dot h_t = - \frac{2}{n}\left(1 + t\frac{d}{dt}(\log u_t)\right)f(x) h_0(x) - \frac{2Ct}{u_t} h_0. 
    \end{equation}

    Hence, note that \[
    t\frac{d}{dt}(\log u_t) = \frac{t\varphi_{\e}'(t)}{\varepsilon}(\log u_{+}-\log u_{-})
    \] and so, by item (5) of Lemma \ref{lem:test.function.for.gluing}, we have that 
    this term is $O(\varepsilon)$, while the other two in \eqref{eq:FirstOrderDerH} are $O(1)$. It then follows at once that the ``first-order terms'' \[
    \frac{\dot{u_t}} {u^3}\tr_{h_t}\dot h_t, \quad \frac{1}{u^2}|\dot h_t|^2, \quad \frac{1}{4u^2}\left(\tr_{h_t}\dot h_t\right)^2
    \]are all $O(1)$ in the sense above.  
  Next, differentiating \eqref{eq:FirstOrderDerH} once again there holds
    \begin{align*}
        -\frac{d}{dt}\left(\frac{1}{u_t}\dot h_t\right) &= \frac{2}{n} \frac{d}{dt}\left(t\left(\frac{d}{dt}\log u_t\right)\right)f(x)h_0(x)
         + \frac{2C}{u_t}h_0 - 2C \frac{t}{u_t}\left(\frac{d}{dt} (\log u_t)h_0(x)\right)\\
            &= \frac{2}{n}(\log u_{+} - \log u_{-})f(x)h_0(x)\cdot \frac{1}{\varepsilon} \left(t\varphi_{\e}'(t)\right)' + \frac{2C}{u_t} h_0 - 2C \frac{t}{u_t}\left(\frac{d}{dt}(\log u_t)h_0(x)\right).
    \end{align*}
    Recall that, because of \eqref{eq:warped.BarHanke.scalar.curvature}, we need to estimate
    \[
- \frac{1}{u^2} \tr_{h_t} \ddot h_t=-\frac{1}{u}\tr_{h_t}\left(\frac{d}{dt}\left(\frac{1}{u_t}\dot h_t\right)+\frac{\dot u_t}{u^2_t}\dot h_t\right)
        \]
        and so it is clear, since by the preceding discussion we have control on the first-order terms, that it is sufficient for our purposes to find a lower bound on the first summand on the right-hand side, which is indeed what we have computed above.
    As before (Section \ref{sec:Prescribing}), we have the estimate $\tr_{h_t}\left(\frac{2C}{u_t}h_0\right)= \frac{2nC}{u_t}+O(1)$, while 
    \[
    \tr_{h_t}\left(2C \frac{t}{u_t}\frac{d}{dt}(\log u_t)h_0\right)= 2C\cdot O(\varepsilon)=O(1)\] where we have exploited twice our requirement that $C\e\leq 1$. The only remaining term we would need to estimate is 
    \[\tr_{h_t}\left(\frac{2}{n}(\log u_{+} - \log u_{-})f(x)h_0(x)\cdot \frac{1}{\varepsilon}(t\varphi_{\e}'(t))'\right).\]
    Towards that goal, we have that
    \[ 2\log\left(\frac{u_{+}}{u_{-}}\right)f(x) \cdot \frac{1}{\varepsilon}(t\varphi_{\e}'(t))'
          \ge -4\log\frac{u_{+}}{u_{-}} f(x) 
          = O(1)
    \]
    which gives a lower bound on $\tr_{h_0} \left(\frac{2}{n}\log \frac{u_{+}} {u_{-}}f(x)h_0(x)\cdot \frac{1}{\varepsilon}(t\varphi_{\e}'(t))'\right)$. 
    Here we have used that $u_{+}>u_{-}$, $f(x)\ge 0$, and conclusion (4) of Lemma \ref{lem:test.function.for.gluing}. About the resulting summand, that is 
    \[
    \tr_{h_t}\left(\frac{2}{n}\log \frac{u_{+}}{u_{-}}f(x)h_0(x)\cdot \frac{1}{\varepsilon}(t\varphi_{\e}'(t))'\right) 
    - \tr_{h_0}\left(\frac{2}{n}\log \frac{u_{+}}{u_{-}}f(x)h_0(x)\cdot \frac{1}{\varepsilon}(t\varphi_{\e}'(t))'\right)
    \]
     we employ the usual pointwise algebraic inequality and claim its absolute value is bounded by
    \[
        2\|h_t - h_0\|_{h_0} \cdot \left\|\frac{2}{n}\log \frac{u_{+}}{u_{-}}f(x)h_0(x)\cdot \frac{1}{\varepsilon}(t\varphi_{\e}'(t))'\right\|_{h_0}
            \le O(1) \left|\frac{t}{\varepsilon} (t\varphi_{\e}'(t))'\right|
            = O(1).
    \]
    thanks to item (6) of Lemma \ref{lem:test.function.for.gluing}. Putting up these estimates together, we have that, when $t\in [0,\varepsilon]$, 
    \begin{equation}\label{eq:ScCurvBalance}
     R_{\hat g} \ge \frac{2nC}{u^2}-O(1) > 0,
     \end{equation}
    provided that $C>C^*(u_\pm, f, h_0)$ and, in turn, $\e<\min\left\{\e_{\ast}(u_\pm, f,h_0, \delta),\e_0(\delta)\right\}$. Thereby, the conclusion follows.

\
  
\textbf{Step 2:} let $\hat{g}$ be the smooth metric on $M$ that we just constructed, and let $t:M\to\R$ the ``signed distance'' function that we employed. It is appropriate to emphasize the dependence on $\delta$ so we will rather write $\hat{g}_{\delta}$ throughout; from now onward, we will conveniently write $\sigma$ in lieu of $\e_0(\delta_0)$ as introduced in Step 1.

At this stage, we will apply a conformal deformation to the metrics $\hat{g}_{\delta}$ (where, recall, $\delta\in(0,\delta_0]$) to obtain a continuous family of metrics with the desired features as soon as $\delta\in (0,\delta_1)$, for a suitably chosen positive $\delta_1<\delta_0$, with $\hat{U}=\left\{(x,t)\in U \ : \ |t|<2\sigma\right\}$.
To do so properly, we will employ (without renaming) a modification of the smooth cutoff function of Lemma \ref{lem:BarHanke_testfunction2}; in fact, all we need to require is that $\chi_{\mu}(x)=x$ for $x\leq \mu$, $\chi_{\mu}(x)=0$ for $x\geq 2\mu$ and be positive for $x\in (0,2\mu)$ under the sole additional constraint that $\chi_{\mu}(x)\leq x$ for any $x$. Let, further, $Y$ be as in the statement of the theorem, and let $Z$ be the union of all other faces of $M$; in particular $\partial M=Y\sqcup Z$ modulo the (possibly empty) singular set where they overlap.

Thanks to Step 1, we have that for any $\delta>0$ small enough there exists $\varrho=\varrho(\delta)>0$ with $\varrho(\delta)\to 0$ as $\delta\to 0$ such that the principal eigenvalue for the problem
\begin{equation}\label{eq:BVPforConf}
    \begin{cases}
    -\lapl_{\hat{g}_{\delta}} w+\frac{1}{c(n)}\chi_{\varrho(\delta)}(R_{\hat{g}_{\delta}}) w  =\lambda w & \text{on} \ M \\
   \hspace{4.3mm} \partial_{\nu_{\delta}} w +\frac{2}{c(n)} \chi_{\varrho(\delta)}(H_{\hat{g}_{\delta}}) w  =0  & \text{on} \ Y \\
   \hspace{36.4mm} \partial_{\nu_{\delta}} w  =0  & \text{on} \ Z 
    \end{cases}
    \end{equation}
 that - let us recall - has the variational characterization
\begin{equation}\label{eq:VarCharProblem}
\lambda_1=\inf_{w\neq 0}\frac{\int_M |\nabla_M w|^2+\frac{1}{c(n)}\chi_{\varrho(\delta)}(R_{\hat{g}_{\delta}}) w^2+\frac{2}{c(n)}\int_{Y} \chi_{\varrho(\delta)}(H_{\hat{g}_{\delta}}) w^2}{\int_{M}w^2}
\end{equation}
is \emph{strictly positive} for $\delta$ sufficiently small (cf. Appendix A in \cite{CarLi19}); note that, without loss of generality, we can (and shall) require in $\varrho(\delta)>\delta$ for all $\delta\in(0,\delta_0)$. Here and in the sequel of the proof it is set $c(n)=4(n-1)/(n-2)$. Then, it also (automatically from the fact that we have chosen the cutoff thresholds so that $\varrho(\delta)\to 0$ as $\delta\to 0$) follows that 
\begin{equation}\label{eq:Smallness}
 \|\chi_{\varrho(\delta)}(R_{\hat{g}_{\delta}})\|_{L^{\infty}(M)}\to 0,  \ \ \ \|\chi_{\varrho(\delta)}(H_{\hat{g}_{\delta}})\|_{L^\infty(Y)}\to 0 
\end{equation}
as $\delta\to 0$. By simply taking $w=1$ in \eqref{eq:VarCharProblem} it then follows that the principal eigenvalue in question satisfies
\begin{equation}\label{eq:Smallness2}
\lambda_1(\delta)\to 0, \ \ (\delta\to 0).
\end{equation}
Thus, given such smallness conditions \eqref{eq:Smallness} and \eqref{eq:Smallness2}, we have that Moser's Harnack inequality for \eqref{eq:BVPforConf} then implies that the corresponding first eigenfunction $u_\delta>0$ satisfies $\sup_U(u_{\delta})/\inf_U (u_{\delta})\to 1$ as $\delta\to 0$ and so, after suitably normalizing, $\|u_{\delta}-1\|_{C^0(-3\sigma\leq t\leq 3\sigma)}\to 0$ as $\delta\to 0$; in particular by standard elliptic regularity there holds in fact 
\begin{equation}\label{eq:C2}
\|u_{\delta}-1\|_{C^2(\sigma\leq |t|\leq 2\sigma)}\to 0, \ \ (\delta\to 0).
\end{equation}

Now we introduce the smooth function 
$\xi(p):=\overline{\xi}_{\sigma}(t(p))$ where $\overline{\xi}_{\sigma}\in C^{\infty}(\R;\R)$ is a ``bump function'' that equals 0 for $|s|\leq \sigma$, equals 1 for $|s|\geq 2\sigma$ and has a monotone transition inbetween. We then define (cf. Lemma 6.1 in \cite{LiMan21}) the conformally deformed metrics
\begin{equation}\label{eq:LocConfDef}
g_{\delta}=(\xi+(1-\xi)u_{\delta})^{\frac{4}{n-2}}\hat{g}_{\delta}.
\end{equation}
To check all claims in the statement of the theorem, we distinguish three regions:

\underline{for $|t|\geq 2\sigma$}: one has $\hat{g}_{\delta}=g_{\delta}=g_{\pm}$ and so all conclusions descend bijectively from the corresponding assumptions on the given metrics $g_{-}$ and $g_{+}$;

\

\underline{for $|t|\leq \sigma$}: one has $g_{\delta}=u_{\delta}^{\frac{4}{n-2}}\hat{g}_{\delta}$ and so, since (see e.\,g. \cite[Appendix B]{CarLi19}) in view of \eqref{eq:BVPforConf} one has
\[
R_{g_{\delta}}=u_{\delta}^{-\frac{n+2}{n-2}}[(R_{\hat{g}_{\delta}}-\chi_{\varrho(\delta)}(R_{\hat{g}_{\delta}}))+c(n)\lambda_1 u_{\delta}]
\]
we rely  and the positivity of the principal eigenvalue to ensure that $R_{g_{\delta}}>0$ for small enough $\delta$;

\

\underline{for $\sigma\leq |t|\leq 2\sigma$}: here we shall rather note that (because of \eqref{eq:C2}) the conformal factor in \eqref{eq:LocConfDef} converges uniformly in $C^2$ to $1$ in the given (compact) annular region, and so for any $\delta$ small enough the scalar curvature of the metric is positive.

In the regime $0\leq |t|\leq 2\sigma$, so for the last two cases above, however we still need to discuss the mean curvature of the boundary of $M$. Building again on the standard formulae for conformal change there holds (set $\phi:=\xi+(1-\xi)u_{\delta}$ for notational convenience)
\[
H_{g_{\delta}}=\phi^{-\frac{n}{n-2}}\left(H_{\hat{g}_{\delta}}\phi+\frac{c(n)}{2}\partial_{\nu_{\delta}}\phi\right)
\]
so that $H_{g_{\delta}}$ equals, modulo multiplication by a positive function,
\begin{equation}\label{eq:ConvexCombMC}
\xi H_{\hat{g}_{\delta}}+(1-\xi)(H_{\hat{g}_{\delta}}-\chi_{\varrho(\delta)}(H_{\hat{g}_{\delta}}))u_{\delta}
\end{equation}
which is thus the sum (in fact: convex combination) of two non-negative terms in the annular region, with just the second summand (that is patently non-negative) when $0\leq |t|\leq\sigma$. This completes the proof in the mean-convex case; in the minimal case it suffices to note that, following the same argument, the condition $\delta<\varrho_{\delta}$ implies that both summands in question are actually zero. 
\end{proof}

\begin{remark}\label{rem:WarpFactNoMatch}(The warping factors cannot possibly match)
 We explicitly note that the first step in the proof above, that is ``building a connecting bridge'' by interpolation, is in general \emph{unavoidable}. For indeed, since one that the faces to be attached be isometric, by virtue of Lemma \ref{lem:Formulae} it can be $u^{+}=u^{-}$ only if $H_{g_+, Y_{+}}=H_{g_{-},Y_{-}}$ at each point of the interface in question, namely only in the very special case when the mean curvature of the cylindrical boundary is the same, when measured in metric $g_{+}, g_{-}$, along the boundary of the interface.
\end{remark}

\begin{remark}\label{rem:Interp2sidedBounds}
It is apparent from the argument above that, if one could engineer an interpolation function $\varphi_{\e}:[0,\varepsilon]\to \R$ satisfying items (1), (2), (3), (4), (5) in Lemma \ref{lem:test.function.for.gluing} and, in lieu of (6), a stronger upper bound of the form
\begin{equation}\label{eq:ImpossibleUpperBound}
(t\varphi_{\e}'(t))'\le \mu \varepsilon, \ \ \forall t\in [0,\varepsilon]
\end{equation}
for some given $\mu>0$ (independent of $\varepsilon$) then the conclusion of Step 1 would hold true under the sole assumption $(\textbf{H})$, i.\,e. irrespective of any sign assumption on the separating function $f$. However, we note that the requirement \eqref{eq:ImpossibleUpperBound} is in fact incompatible already with the pair $(2), (3)$. Indeed, integrating \eqref{eq:ImpossibleUpperBound} twice gives - because of $(2)$ - a bound of the form $\varphi_{\varepsilon}(\varepsilon)\leq \mu\varepsilon^2$, which contradicts the independent assumption that $\varphi_{\varepsilon}(\varepsilon)=\varepsilon$ as soon as $\varepsilon<1/\mu$.
\end{remark}

Most importantly, we wish to outline here how to handle general lower scalar curvature bounds.

\begin{remark}\label{rem:ExtGenBounds}
It is possible to extend our main result, Theorem \ref{thm:Desing}, requiring scalar curvature lower bounds of the general form $R_g > R_0$ provided the same is assumed on the input metrics $g_{-}, g_{+}$ (together with all other standing assumptions). This is quite clear from the form of equation \eqref{eq:ScCurvBalance} for Step 1 (and analogously for the two preliminary steps carried through in the previous sections), as one is only required to choose $C>C^*(u_\pm, f, h_0, R_0)$ and, in turn, $\e<\min\left\{\e_{\ast}(u_\pm, f,h_0, \delta),\e_0(\delta)\right\}$. 
In Step 2, the only changes we need to make are to ``shift'' the cutoff function $\chi_{\mu}$ to $\chi^{R_0}_{\mu}$ satisfying $\chi^{R_0}_{\mu}(x)=x$ if $x\leq R_0+\mu$, $\chi^{R_0}_{\mu}(x)=R_0$ if $x\geq R_0+2\mu$ under the usual constraint that $\chi^{R_0}_{\mu}(x)\leq x$ for all $x$,  and to solve for the conformal factor considering the modified boundary value problem
\begin{equation}\label{eq:BVPforConfGen}
    \begin{cases}
    -\lapl_{\hat{g}_{\delta}} w+\frac{1}{c(n)}(\chi^{R_0}_{\varrho(\delta)}(R_{\hat{g}_{\delta}})-R_0) w   =\lambda w & \text{on} \ M \\
 \hspace{17.4mm}  \partial_{\nu_{\delta}} w +\frac{2}{c(n)} \chi_{\varrho(\delta)}(H_{\hat{g}_{\delta}}) w  =0  & \text{on} \ Y \\ \hspace{49.2mm} \partial_{\nu_{\delta}} w  =0  & \text{on} \ Z.
    \end{cases}
    \end{equation}
\end{remark}

\section{Some direct applications}\label{sec:Applications}

As anticipated in the introduction, we collect here some applications of the main theorem above, and of its method of proof. 
To begin with, here is a simple special instance of our smoothing result.

\begin{corollary}\label{cor:HalfSphere}
Let $n+1\geq 3$, let $M_{+}, M_{-}$ be both diffeomorphic, in the category of manifold with corners to
\[
\mathbb{D}^{n+1}_{+}:=\mathbb{D}^{n+1}\cap \left\{x^{n+1}\geq 0\right\}, \ \ \text{for} \ \ \mathbb{D}^{n+1}=\{(x^1,\ldots x^{n+1}) \ : \ \|x\|^2\leq 1\}
\]
let $F_{+}, F_{-}$ denote the faces associated to $\mathbb{D}^{n}$ by the diffeomorphism, and $Y_{+}, Y_{-}$ the other faces respectively in $M_{+}, M_{-}$. Assume we are given Riemannian metrics $g_{+}, g_{-}$ on $M_{+}, M_{-}$ that make $F_{+}$ isometric to $F_{-}$ and such that adjacent faces meet at a right angle, the scalar curvature of both $(M_{+}, g_{+})$ and $(M_{-}, g_{-})$ is positive, and the faces are all mean-convex. Then there exists a Riemannian metric on the closed Euclidean ball $\mathbb{D}^{n+1}$ that is equal to the given ones outside any pre-assigned neighborhood of the interface, has everywhere positive scalar curvature and mean-convex boundary. Furthemore, the boundary can be made minimal in the special case when the same is required for $Y_{\pm}$ in $(M_{\pm}, g_{\pm})$ for any consistent choice of signs.
\end{corollary}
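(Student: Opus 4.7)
I would treat the corollary as a direct application of Theorem \ref{thm:Desing}, the only genuine verification being the topological identification of the glued space with the closed ball. Since each $M_\pm$ is, by hypothesis, diffeomorphic in the category of manifolds with corners to $\mathbb{D}^{n+1}_{+}$, the glued manifold $M = M_{-}\sqcup_\phi M_{+}$ is obtained by gluing two closed half-balls along their equatorial disk faces; as a smooth manifold with boundary this is patently $\mathbb{D}^{n+1}$, with the former disks $F_\pm$ merging into a single interior hypersurface $X$ (diffeomorphic to $\mathbb{D}^n$) and the hemispherical faces $Y_\pm$ combining to form the boundary sphere. In particular $\text{sing}(M_\pm)\setminus F_\pm = \emptyset$, so by the remark following Theorem \ref{thm:Desing} the output of the desingularization is automatically a smooth compact manifold with boundary, rather than merely a manifold with corners.

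With this identification at hand, I would verify the remaining hypotheses of Theorem \ref{thm:Desing}. The cylindricality of $F_\pm$, the orthogonal meeting condition $(\boldsymbol{\perp})$, the positivity of $R_{g_\pm}$, and the mean-convexity of $Y_\pm$ are all either hypothesized or immediate from the half-ball identification. The jump condition \eqref{eq:JumpCond} is the one point to be handled with some care, but here I would simply take $f\equiv 0$: this trivially satisfies the sign requirement (both $f\geq 0$ and $f\leq 0$ pointwise), and the inequalities $H_{g_{-},F_{-}}\geq 0$ and $H_{g_{+},F_{+}}\geq 0$ reduce precisely to the assumed mean-convexity of the interface faces $F_\pm$.

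Having checked every hypothesis, I would then invoke Theorem \ref{thm:Desing} to produce a Riemannian metric $g$ on $M$ that agrees with $g_\pm$ outside the pre-assigned neighborhood of the interface, has positive scalar curvature throughout, and has mean-convex boundary --- minimal in the special case when $Y_\pm$ are minimal in $(M_\pm, g_\pm)$. Under the diffeomorphism $M \cong \mathbb{D}^{n+1}$ this is the desired metric. I do not anticipate any real obstacle in any step beyond the application of the main theorem itself: the content of the corollary is entirely the recognition that its hypotheses subsume those of Theorem \ref{thm:Desing}, with the convenient choice $f\equiv 0$ making the two-sided jump condition automatic from mean-convexity of the two interfaces.
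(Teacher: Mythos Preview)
Your proposal is correct and matches the paper's treatment: the paper states Corollary \ref{cor:HalfSphere} without proof, evidently regarding it as an immediate application of Theorem \ref{thm:Desing} once one observes that gluing two copies of $\mathbb{D}^{n+1}_{+}$ along their equatorial disks yields $\mathbb{D}^{n+1}$ and that $\text{sing}(M_\pm)\setminus F_\pm=\emptyset$. Your choice $f\equiv 0$ to verify the jump condition \eqref{eq:JumpCond} from the assumed mean-convexity of $F_\pm$ is exactly the right observation.
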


We proceed by proving that the relations of weak PSC min-concordance and weak PSC mc-concordance in \cite{CarLi23} are indeed transitive, therefore equivalence relations as claimed.

\begin{corollary}\label{eq:Transitive} 
Let $X$ be a compact manifold with boundary and let us assume that there exist PSC Riemannian metrics $g_{0,1}$ and $g_{1,2}$ on $X\times [0,1]$ such that, in both cases, $\partial X\times [0,1]$ is minimal (respectively: mean-convex), and the slices $X\times \left\{0\right\}$ and $X\times\left\{1\right\}$ are free boundary minimal surfaces; furthermore, $g_{0,1}$ restricts to $h_0$ on $X\times \left\{0\right\}$, and to $h_1$ along $X\times \left\{1\right\}$; similarly $g_{1,2}$ restricts to $h_1$ on $X\times \left\{0\right\}$, and to $h_2$ along $X\times \left\{1\right\}$.

Then there exists a PSC Riemannian metric $g_{0,2}$ on $X\times [0,1]$ that makes $\partial X\times [0,1]$ minimal (respectively: mean-convex), and both $X\times \left\{0\right\}$ and $X\times\left\{1\right\}$ free boundary minimal surfaces, and in addition $g_{0,2}$
   restricts to $h_0$ on $X\times \left\{0\right\}$, and to $h_2$ along $X\times \left\{1\right\}.$
\end{corollary}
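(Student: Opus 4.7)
The plan is to apply Theorem \ref{thm:Desing} directly. We set $(M_-,g_-):=(X\times[0,1], g_{0,1})$ and $(M_+,g_+):=(X\times[0,1], g_{1,2})$, viewed as smooth manifolds with corners whose faces are $X\times\{0\}$, $X\times\{1\}$, and $\partial X\times[0,1]$. We designate the cylindrical faces to be glued as $F_-:=X\times\{1\}\subset M_-$ and $F_+:=X\times\{0\}\subset M_+$. Since both induce the same metric $h_1$, the identity of $X$ provides an isometry $\phi$ as required, and $M:=M_-\sqcup_\phi M_+$ is naturally a manifold with corners diffeomorphic to $X\times[0,2]$.

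The remaining hypotheses of Theorem \ref{thm:Desing} follow at once from those of the corollary: the adjacent cylindrical faces $\partial X\times[0,1]$ in each piece are mean-convex (respectively minimal) by assumption, and the free boundary condition on the slices $X\times\{0\}, X\times\{1\}$ of each cobordism guarantees the orthogonality between $Y_\pm$ and $F_\pm$. Crucially, the assumption that $F_-$ and $F_+$ are themselves free boundary \emph{minimal} yields $H_{g_-,F_-}\equiv H_{g_+,F_+}\equiv 0$, so the jump condition \eqref{eq:JumpCond} is satisfied with the trivial choice $f\equiv 0$; since the zero function is both non-negative and non-positive, the sign requirement causes no issue.

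Applying Theorem \ref{thm:Desing}, we then obtain a smooth metric $\tilde g$ on $M$ that coincides with the original metrics outside a pre-assigned neighborhood of the interface $X$, has positive scalar curvature everywhere, and has mean-convex (respectively minimal) boundary. We will choose the neighborhood $U$ in the application of the theorem small enough that the modification happens strictly away from the two \emph{outer} slices $X\times\{0\}\subset M_-$ and $X\times\{1\}\subset M_+$; consequently $\tilde g$ restricts to $h_0$ and $h_2$ on those slices, which remain free boundary minimal since nothing has been altered in their neighborhoods. Reparametrizing $[0,2]\to[0,1]$ via any orientation-preserving diffeomorphism transports $\tilde g$ to the desired metric $g_{0,2}$ on $X\times[0,1]$.

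We anticipate no essential obstacle beyond the verification of hypotheses above: the entire analytic content of the transitivity statement has been absorbed into Theorem \ref{thm:Desing}, and the role of the minimality of the intermediate faces in the definition of (weak) PSC min-concordance is precisely to render the sign condition in \eqref{eq:JumpCond} automatically satisfied, thereby making the gluing procedure applicable. The mean-convex version is entirely analogous, the only change being that one invokes the mean-convex (rather than minimal) conclusion for the cylindrical boundary in Theorem \ref{thm:Desing}.
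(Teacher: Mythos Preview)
Your proof is correct and follows exactly the same approach as the paper: a direct application of Theorem~\ref{thm:Desing} with $f\equiv 0$ to obtain a metric on $X\times[0,2]$, followed by a reparametrization of the interval. The paper's proof is in fact a single sentence to this effect; your version simply spells out the verification of the hypotheses.
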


\begin{proof}
This is a straightforward application of Theorem \ref{thm:Desing}; we easily obtain a metric on $X\times [0,2]$ with the desired properties, after which it just suffices to reparametrize the time interval.
\end{proof}

To follow up on Remark 2.6 in \cite{CarLi23}, we then prove that the relation of weak PSC min-concordance (or, respectively, weak PSC mc-concordance) is the same as its strong counterpart, if one restricts a priori to the subclass $\met_{R>0,H=0}(X)$ (respectively: $\met_{R>0,H\geq 0}(X)$).

\begin{corollary}\label{cor:Equivalence}
Let $X$ be a compact manifold with boundary. If $h_{-1}, h_1\in \met_{R>0,H=0}(X)$  are weakly PSC min-concordant through a PSC metric $g$ on $M=X\times [-1,1]$ (thus: a metric making the cylidrical boundary minimal, and both faces free boundary minimal surfaces) then they are also (strongly) PSC min-concordant through a PSC metric $\overline{g}$  (thus: making the cylindrical boundary minimal, and being a Riemannian product in a neighborhood of both faces). 
\end{corollary}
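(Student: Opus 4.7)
The strategy is to extend the concordance $g$ by attaching short Riemannian product cylinders beyond each face and then to smoothly glue these three pieces via Theorem~\ref{thm:Desing}, whose hypotheses are automatically met in this setting because of the free boundary minimality of the two faces of $(M,g)$.

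More precisely, fix a small $\varepsilon>0$ and set $M_{-}^{\text{cyl}} := X\times [-1-\varepsilon,-1]$ with the Riemannian product metric $g_{-}^{\text{cyl}} := dt^2 + h_{-1}$, and likewise $M_{+}^{\text{cyl}} := X\times [1,1+\varepsilon]$ with $g_{+}^{\text{cyl}} := dt^2 + h_{1}$. Since $h_{\pm 1}\in\met_{R>0,H=0}(X)$, each of these product cylinders has positive scalar curvature, minimal cylindrical boundary, and totally geodesic terminal faces (which in particular meet the cylindrical boundary orthogonally). We now apply Theorem~\ref{thm:Desing} to glue $(M_{-}^{\text{cyl}},g_{-}^{\text{cyl}})$ to $(M,g)$ along the isometric faces $X\times\{-1\}$, and then apply it again to glue the resulting space to $(M_{+}^{\text{cyl}},g_{+}^{\text{cyl}})$ along $X\times\{1\}$. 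At each interface the right-angle condition holds (for $g$ by the free boundary hypothesis, for the product cylinders trivially), the adjacent faces are minimal, and the jump condition \eqref{eq:JumpCond} is satisfied with $f\equiv 0$ (both sides have vanishing mean curvature along the face), which in particular fits under either of the two sign cases in \eqref{eq:JumpCond}. Invoking Theorem~\ref{thm:Desing} in the minimal regime produces a smooth metric $\tilde g$ on $\tilde M := X\times [-1-\varepsilon,1+\varepsilon]$ that has positive scalar curvature, minimal cylindrical boundary, and agrees with $g_{\pm}^{\text{cyl}}$ outside a pre-assigned small neighborhood of $X\times\{-1\}\cup X\times\{1\}$; in particular $\tilde g$ is a Riemannian product in some open neighborhood of each terminal face $X\times\{\pm(1+\varepsilon)\}$.

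To finish, we just need to reparametrize the interval. Choose a smooth diffeomorphism $\psi:[-1,1]\to [-1-\varepsilon,1+\varepsilon]$ with $\psi(\pm 1)=\pm(1+\varepsilon)$ and with $\psi'(t)\equiv 1$ in a neighborhood of $t=\pm 1$; such a $\psi$ exists since the two rigid segments at the endpoints together cover a subinterval of length $2$. Define $\Phi(x,t) := (x,\psi(t))$ and set $\bar g := \Phi^{\ast}\tilde g$. Then $\bar g$ is a PSC metric on $X\times [-1,1]$ with minimal cylindrical boundary, and near each face the computation $\Phi^{\ast}(dt^2 + h_{\pm 1}) = (\psi'(t))^{2}dt^2 + h_{\pm 1} = dt^2 + h_{\pm 1}$ shows that $\bar g$ is genuinely a Riemannian product. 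In particular $\bar g$ restricts to $h_{\pm 1}$ on $X\times\{\pm 1\}$, so $h_{-1}$ and $h_{1}$ are strongly PSC min-concordant as claimed.

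There is no real obstacle here beyond the careful verification of the hypotheses of Theorem~\ref{thm:Desing}: all the analytic difficulty has already been absorbed into that theorem, and the only genuine structural input we need is precisely that $H=0$ on both sides of each interface (so the jump function $f$ is identically zero, making the dichotomy in \eqref{eq:JumpCond} vacuous) together with the free boundary condition (supplying the right-angle hypothesis). The analogous statement for mean-convex concordance would be proven in the same way, replacing ``minimal'' by ``mean-convex'' in each instance and invoking the mean-convex conclusion of Theorem~\ref{thm:Desing}.
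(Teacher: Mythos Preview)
Your proof is correct and follows essentially the same route as the paper: attach product cylinders beyond each face, invoke Theorem~\ref{thm:Desing} at the two interfaces (where the jump condition \eqref{eq:JumpCond} holds with $f\equiv 0$ since both sides are minimal), and reparametrize. Your write-up is in fact more careful than the paper's, which simply says ``apply Theorem~\ref{thm:Desing} and reparametrize''; your explicit verification of the right-angle and jump hypotheses, and your choice of $\psi$ with $\psi'\equiv 1$ near the endpoints to preserve the product form, are welcome details.
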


The argument for justifying the preceding statement is quite striking in its simplicity, which ultimately builds on the fact that (for necessity) we developed methods to glue metrics dealing with mismatching warping factors.

\begin{proof} Firstly, consider on $X\times [1,2]$ the product metric $g_{(+1)}=dt^2+h_1$, and similarly $g_{(-1)}=dt^2+h_{-1}$ on $X\times [-2,-1]$. Since the interfaces $X\times\left\{\pm 1\right\}$ are both minimal on either side, we can then apply Theorem \ref{thm:Desing} to get the desired conclusion, up to reparametrizing in $t$.
\end{proof}

It is then clear that the very same strategy allows to prove the obvious analogue of Corollary \ref{cor:Equivalence} for weak PSC mc-concordance, when one assumes $h_{-1}, h_1\in \met_{R>0, H\geq 0}(X)$, the class of PSC metrics with mean-convex boundary.

\appendix

\section{Designing an ad hoc interpolation function}\label{app:GluingFunction} 

In Step 1 of the proof of our main result, Theorem \ref{thm:Desing}, we conveniently employ the following auxiliary function.

\begin{lemma}\label{lem:test.function.for.gluing}
    For every $\varepsilon\in (0,1/2)$, there exists a smooth function $\varphi_{\e}: [0,\varepsilon]\to \R$ such that the following holds:
    \begin{enumerate}[(1)]
        \item $|\varphi_{\e}(t)|\le 2\varepsilon$ for any $t\in [0,\varepsilon]$.
        \item $\varphi_{\e}(t)=0$ in a neighborhood of $0$.
        \item $\varphi_{\e}(t)=\varepsilon$ in a neighborhood of $\varepsilon$.
        \item $(t\varphi_{\e}'(t))'\ge -2\varepsilon$ for any $t\in [0,\varepsilon]$.
        \item $|t\varphi_{\e}'(t)| \le 2\varepsilon^2$ for any $t\in [0,\varepsilon]$.
        \item $\left|t(t\varphi_{\e}'(t))'\right|\le \e$ for any $t\in [0,\varepsilon]$.
    \end{enumerate}
   \end{lemma}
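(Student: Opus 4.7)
The plan is to reduce the problem to constructing an auxiliary function $\eta(t) \eqdef t\varphi_\e'(t)$ with prescribed properties, and then set $\varphi_\e(t) \eqdef \int_0^t \eta(s)/s\, ds$. Concretely, requiring $\varphi_\e$ to satisfy (2)-(3) translates, under the substitution above, into the conditions that $\eta$ be a smooth non-negative function compactly supported in $(0,\varepsilon)$, with $\int_0^\varepsilon \eta(s)/s\,ds = \varepsilon$; requirements (4),(5),(6) become respectively the pointwise bounds $\eta'(t)\ge -2\varepsilon$, $|\eta(t)|\le 2\varepsilon^2$, and $|t\eta'(t)|\le \varepsilon$; requirement (1) then follows since by monotonicity $0\leq\varphi_\e \le \varepsilon$.

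Next, I would construct $\eta$ as a logarithmic plateau bump. Fix a small universal constant $c>0$, to be chosen, let $b\eqdef \varepsilon/4$ and (this is the crucial choice) $a\eqdef b\cdot e^{-1/(c\varepsilon)}$, so that $a$ is exponentially small in $1/\varepsilon$ and $\log(b/a) = 1/(c\varepsilon)$. Using a standard smooth monotone transition function $\chi:\R\to\R$ equal to $0$ on $(-\infty,0]$ and to $1$ on $[1,\infty)$, define $\eta$ to vanish on $[0,a]\cup [2b,\varepsilon]$, to equal the plateau value $c\varepsilon^2$ on $[2a,b]$, and to interpolate smoothly via $c\varepsilon^2\chi((t-a)/a)$ on $[a,2a]$ and via $c\varepsilon^2(1-\chi((t-b)/b))$ on $[b,2b]$. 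The integral $\int_0^\varepsilon \eta(s)/s\,ds$ is dominated by the plateau contribution $c\varepsilon^2\log(b/(2a)) = \varepsilon - c\varepsilon^2\log 2$, with a correction of order $c\varepsilon^2$ from the transitions; a small rescaling of $c$ (of relative size $O(c\varepsilon)$) ensures the integral hits $\varepsilon$ exactly.

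The verification of the pointwise bounds is then straightforward: the bound $|\eta|\le 2\varepsilon^2$ is immediate from $c\le 2$. For the lower bound $\eta' \ge -2\varepsilon$, the only problematic region is the right transition $[b,2b]$ of length $b=\varepsilon/4$, where $|\eta'|\le C c\varepsilon^2/b = 4Cc\varepsilon$; this is safe after fixing $c$ small (depending only on $C\eqdef \sup|\chi'|$). For $|t\eta'|\le \varepsilon$, the left transition $[a,2a]$ is where $\eta'$ is largest, namely $|\eta'|\lesssim c\varepsilon^2/a$; however it is multiplied there by $t\le 2a$, so $|t\eta'|\lesssim c\varepsilon^2$, comfortably below $\varepsilon$. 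The right transition is handled in the same way and yields an even better bound.

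The main obstacle is precisely the tension between (5) and the requirement $\int\eta/s = \varepsilon$: since $\eta=t\varphi_\e'$ is forced to be of order $\varepsilon^2$, the integral identity can only be attained by spreading the support of $\eta$ over many logarithmic scales, which is what pushes $a$ to be exponentially small; this in turn would a priori make $\eta'$ blow up on the left transition, and it is property (6) that tames exactly this behavior by pairing $\eta'$ with the small factor $t$. The right transition must instead be of length of order $\varepsilon$ to ensure (4), which is easy to accomodate by taking $c$ small. In short, the construction works precisely because the asymmetric nature of bounds (4) and (6) matches the asymmetric geometry of the plateau bump at its two endpoints.
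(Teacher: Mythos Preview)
Your construction is correct and takes a genuinely different route from the paper's. The paper works one level of integration deeper: it writes $(t\varphi')'=f_0=-2\varepsilon+f$ with $f\ge 0$, so that (4) is automatic, and then must arrange two integral constraints on $f$ (namely $\int_0^\varepsilon f=2\varepsilon^2$ and $\int_0^\varepsilon(\log\varepsilon-\log s)f=\varepsilon+2\varepsilon^2$), which it achieves by a convex combination of two spike functions concentrated at scales $e^{-1/\varepsilon}$ and $e^{-1/(3\varepsilon)}$. By contrast you construct $\eta=t\varphi'$ directly as a nonnegative plateau bump, which makes $\varphi$ monotone (so (1) is immediate and even sharper, $0\le\varphi\le\varepsilon$) at the cost of verifying (4) by hand on the right transition; this is a mild trade and your argument is overall more elementary. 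Both approaches share the essential insight that the auxiliary function must spread over exponentially many logarithmic scales.

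One remark: your ``rescaling of $c$'' is ambiguous as written (if you also let $a$ move with $c$ the correction does not help), and in fact it is unnecessary. With your symmetric choice of transitions $\chi$ and $1-\chi$, a direct change of variables gives
\[
\int_a^{2a}\frac{\eta}{s}\,ds+\int_b^{2b}\frac{\eta}{s}\,ds
= c\varepsilon^2\!\int_0^1\frac{\chi(u)+(1-\chi(u))}{1+u}\,du=c\varepsilon^2\log 2,
\]
which exactly cancels the $-c\varepsilon^2\log 2$ deficit of the plateau, so $\int_0^\varepsilon \eta/s=\varepsilon$ on the nose.
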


\begin{proof} 

    We construct the function $\varphi_{\e}$ in a few steps. (For the sake of readability, throughout this proof we will write $\varphi$ in lieu of $\varphi_{\e}$, thus leaving the dependence of the parameter $\e\in (0,1/2)$ understood; the same convention is also employed for the functions $f,\hat{f}_1, f_1, \hat{f}_2, f_2$ and $f_0$ coming into play in the sequel of the argument.)
    
    A first key step is to construct a smooth function $f:[0,\varepsilon]\to \R$ such that:
    \begin{enumerate}[(i)]
        \item $f(t)\ge 0$ in $[0,\varepsilon]$.
        \item $f(t)=2\varepsilon$ when $t$ is in a small neighborhood of $0$ and $\varepsilon$.
        \item $\int_0^\varepsilon f(s)ds = 2\varepsilon^2$.
        \item $\int_0^\varepsilon (\log \varepsilon-\log s) f(s)ds =\varepsilon+2\varepsilon^2$.
        \item $f(t)<\frac{\varepsilon}{t}$ for all $t\in [0,\varepsilon]$.
    \end{enumerate}
    
    Towards that goal, let us start by considering the function 
    \[\hat f_1(t)=\begin{cases}
        2\varepsilon^2 e^{\frac{1}{\varepsilon}}, \quad &0\le t\le e^{-\frac{1}{\varepsilon}}\\
        0,& e^{-\frac{1}{\varepsilon}}<t\le \varepsilon.
    \end{cases}\]
    Then we note that $\hat f_1$ satisfies properties (i), (iii) and (v); furthermore
    \[\int_0^\varepsilon (\log \varepsilon - \log s) \hat f_1(s)ds = 2\varepsilon+2\varepsilon^2 + 2\varepsilon^2 \log \varepsilon>\varepsilon+2\varepsilon^2,\]
    provided that $\varepsilon$ is sufficiently small. Therefore, we may slightly modify $\hat f_1$ near the points $0, e^{-\frac{1}{\varepsilon}}, \varepsilon$ and obtain a smooth function $f_1$, which satisfies all properties except for (possibly) property (iv), and in addition
    \[\int_0^\varepsilon (\log \varepsilon-\log s)f_1(s)ds > \varepsilon+2\varepsilon^2,\]
    since this is obviously an open condition.
    Similarly, we then consider the function
    \[\hat f_2(t) = \begin{cases}
        2\varepsilon^2 e^{\frac{1}{3\varepsilon}}, \quad & 0\le t\le e^{-\frac{1}{3\varepsilon}}\\
        0,& e^{-\frac{1}{3\varepsilon}}<t\le \varepsilon.
    \end{cases}\]
    Then $\hat f_2$ satisfies properties (i), (iii) and (v), and 
    \[\int_0^\varepsilon (\log \varepsilon-\log s)\hat f_2(s)ds = \frac{2\varepsilon}{3} + 2\varepsilon^2 +2\varepsilon^2 \log \varepsilon<\varepsilon+2\varepsilon^2,\]
    provided that $\varepsilon$ is sufficiently small. Thus, we may analogously change $\hat f_2$ near $0,e^{-\frac{1}{3\varepsilon}}, \varepsilon$ and obtain a smooth function $f_2$, which satisfies all properties except for property (iv), but instead
    \[\int_0^\varepsilon (\log \varepsilon-\log s)f_2(s)ds <\varepsilon+2\varepsilon^2.\]
    Since all the desired properties are invariant under convex linear combinations of functions, there exists a unique real number $\tau=\tau(\e) \in (0,1)$, such that the corresponding function $f = \tau f_1 + (1-\tau) f_2$ satisfies all properties (i) - (v).

  Fix such $f$. We now define $f_0(t)=-2\varepsilon + f(s)$, a zero average function by (iii), and then in turn
    \[\varphi(t) = \int_0^t \frac 1s \int_0^s f_0(r)drds.\]
    We verify the desired properties of $\varphi$. Clearly $\varphi(0)=0$, and since $f_0(t)=0$ in a small neighborhood of $0$ by (ii) conclusion (2) holds. Assertion (4) is also clearly satisfied, as
    \[t\varphi'(t)=\int_0^t f_0(r)dr\quad \Rightarrow \quad (t\varphi'(t))' = f_0(t)=-2\varepsilon+f(t)\ge -2\varepsilon.\]
    From there we also see that for any $t\in [0,\e]$, by virtue of (i) and (iii)
    \[t\varphi'(t) = \int_0^t f_0(r)dr = -2\varepsilon t + \int_0^t f(r)dr \quad \Rightarrow \quad -2\varepsilon t \le t\varphi'(t)\le -2\varepsilon t + 2\varepsilon^2,\]
    hence conclusion (5) holds. Conclusion (6) holds, as $\frac{t}{\varepsilon}(t\varphi'(t))' = \frac{t}{\varepsilon} (-2\varepsilon + f(t))\leq -2t+1$, where the last inequality exploited the fact that $f$ satisfies property (v).

    Now, the above identity, evaluated at $t=\e$, tells us that $\varepsilon\varphi'(\varepsilon) = \int_0^\varepsilon f_0(s)ds = \int_0^\varepsilon (-2\varepsilon+ f(s))ds = 0$. Thus, $\varphi'(\varepsilon) = 0$. Note also that $(t\varphi'(t))' = f_0(t) = -2\varepsilon + f(t)=0$ in a neighborhood of $\varepsilon$ (again by (ii)), so $\varphi'(t)=0$ in a neighborhood of $\varepsilon$. This implies that $\varphi$ is a constant in a neighborhood of $\varepsilon$.  To determine its actual value, we compute:
    \begin{align*}
        \varphi (\varepsilon) &= \int_0^\varepsilon \frac 1t \int_0^t f_0(s) dsdt
                              = \int_0^\varepsilon f_0(s) \int_s^\varepsilon \frac 1t dt ds \\
                              &= \int_0^\varepsilon (-2\varepsilon + f(s)) (\log \varepsilon - \log s)ds 
                              = \varepsilon
    \end{align*}
    where the last equality relies on (iv).
    Consequently, $\varphi=\varepsilon$ in a small neighborhood of $\varepsilon$, which means that conclusion (3) is verified as well. Finally, we have that $\varphi'(t) = \frac 1t \int_0^t f_0(s)ds$, and since $f_0\ge -2\varepsilon$, we have $\varphi'\ge -2\varepsilon$. Thus, by integrating this inequality over either interval $[0,t], [t,\varepsilon]\subset [0,\varepsilon]$ for all $t\in [0,\varepsilon]$,
    \[\varphi(t) \ge \varphi(0)-2\varepsilon t=-2\varepsilon t,\quad \varphi(t)\le \varphi(\varepsilon)+ 2\varepsilon (\varepsilon-t) \le \varepsilon + 2\varepsilon^2.\]
    Hence conclusion (1) holds, which completes the proof.
    \end{proof}

\section*{Acknowledgements}
The authors wish to express their sincere gratitude to Nick Edelen, Lorenzo Mazzieri, Pengzi Miao, Connor Mooney, Alberto Valli and David Wiygul for several enlightening conversations on themes related to those object of the present manuscript, and for their interest in this work. 
This project has received funding from the European Research Council (ERC) under the European Union’s Horizon 2020 research and innovation programme (grant agreement No. 947923). C.L. was supported by an NSF grant (DMS-2202343) and a Simons Junior Faculty Fellowship.

\bibliography{biblio}

\end{document}